\let\ssection=\section
\renewcommand{\section}{\setcounter{equation}{0}\ssection}
\newcommand {\emptycomment}[1]{}
\newcommand{\bbZ}{\mathbb{Z}}
\newcommand{\ad}{\mathrm{ad}}
\newcommand{\Hom}{\mathrm{Hom}}
\newcommand{\End}{\mathrm{End}}
\newcommand{\half}{\textstyle{\frac{1}{2}}}
\newcommand{\fa}{\mathfrak{a}}
\chardef\s=110
\chardef\g=103
\newtheorem{theo}{Theorem}
\newtheorem{lemma}{Lemma}[section]
\newtheorem{prop}[lemma]{Proposition}
\newtheorem{exe}[lemma]{Example}
\newtheorem{defi}[lemma]{Definition}
\def\a{\alpha}
\def\b{\beta}
\def\e{\varepsilon}
\def\g{\gamma}
\def\r{\rho}
\def\s{\sigma}
\begin{document}

\title{On Hom-Lie antialgebra}

\author[T. Zhang]{Tao Zhang}
\address{College of Mathematics and Information Science\\
Henan Normal University\\
Xinxiang 453007, PR China}
\email{zhangtao@htu.edu.cn}

\author[H. Zhang]{Heyu Zhang}
\address{College of Mathematics and Information Science\\
Henan Normal University\\
Xinxiang 453007, PR China}
\email{zhy199404@126.com}

\date{}

\begin{abstract}
In this paper, we introduced the notion of Hom-Lie antialgebras.
The representations and cohomology theory of Hom-Lie antialgebras are investigated. We prove that the equivalent classes of abelian extensions of Hom-Lie antialgebras are in one-to-one correspondence to elements of the second cohomology group. We also prove that 1-parameter infinitesimal deformation of a Hom-Lie antialgebra are characterized by 2-cocycles of this Hom-Lie antialgebra with adjoint representation in itself. The notion of Nijenhuis operators of Hom-Lie antialgebra is introduced to describe trivial deformations.
\end{abstract}

\subjclass[2010]{Primary 17D99; Secondary 18G60}

\keywords{Hom-Lie antialgebra; cohomology; abelian extensions; deformations; Nijenhuis operators}

\maketitle

\thispagestyle{empty}


\section{Introduction}
The notion of Lie antialgebras was introduced by Ovsienko in \cite{Ovs}. A Lie antialgebra is a $\bbZ_2$-graded vector space $\fa=\fa_0\oplus\fa_1$ where $\fa_0$ is a commutative associative algebra, $\fa_1$ is equipped with a map $\fa_1\times \fa_1 \to \fa_0$ satisfying some axioms similar to the axioms of Lie algebras, and $\fa_0$ acts commutatively on $\fa_1$ as a derivation, see Definition \ref{def001}.
The representations, universal enveloping algebra and cohomology theory of Lie antialgebras have been investigated in \cite{LM}, \cite{MG} and \cite{LO}.



On the other hand, Hom-type algebras was introduced to deal with $q$-deformations of algebras of vector fields \cite{HLS}.
A Hom-associative algebra is a vector space $A$ with an additional linear map $\a:A \to A$  satisfying the Hom-associative identity: $$\a(x_1)\cdot(x_2\cdot x_3)=(x_1\cdot x_2)\cdot\a(x_3),$$
for all $x_1,x_2,x_3\in A$.
A Hom-Lie algebra is a vector space $\mathfrak{g}$ with an additional linear map $\a:\mathfrak{g}
\to \mathfrak{g}$  satisfying the Hom-Jacobi identity:
$$[\a(y_1),[y_2,y_3]]+[\a(y_2),[y_3,y_1]]+[a(y_3),[y_1,y_2]]=0,$$
for all $y_1,y_2,y_3\in \mathfrak{g}$.
The representations, abelian extensions, deformations  and cohomology theory of Hom-algebras were studied in \cite{AEM,MS,SY}. Universal central extensions of Hom-Lie algebras were studied in \cite{CIP,LS}. It is known that abelian extensions and deformations of Hom-type algebras are governed by the second cohomology group.
Other types of Hom-structures include  BiHom-Lie algebras, Hom-Nambu-Lie algebras, Hom-Hopf algebras, Hom-Poisson algebras and
Hom-Lie-Yamaguti algebras, see \cite{GAC,Ma,Yau1,Yau2,Zhang}.

Motivated by the above results, we introduce the notion of a Hom-Lie antialgebra in this paper.
We define a Hom-Lie antialgebra $(\fa,\a,\b)$ as a supercommutative $\bbZ_2$-graded algebra $\fa=\fa_0\oplus\fa_1$  with
two linear maps $\alpha:\fa_0\rightarrow \fa_0,~\beta:\fa_1\rightarrow \fa_1$ satisfying some compatibility conditions, see Definition \ref{def1}.
When both $\alpha $ and $\beta$ are identity maps, we get the ordinary notion of a Lie antialgebra.
Note that $\alpha $ and $\beta$ are not equal to each other since they act on different spaces.
This is the key difference of Hom-Lie antialgebra in this paper and other types of Hom-algebras in the literature.
The representations and cohomology groups of Hom-Lie antialgebras are investigated.
We also study deformations and abelian extensions of Hom-Lie antialgebras which are described by the second cohomology group.
For the third cohomology group, it will be  related to  the crossed module extensions of a Hom-Lie antialgebra. This is investigated in our subsequent paper \cite{ZZ2}.

The paper is organized as follows. In Section 2, we give some  definitions and notations of Hom-Lie antialgebras. In Section 3, we study representations of Hom-Lie antialgebras and define the cohomology groups of Hom-Lie antialgebras. In Section 4, we study abelian extensions of Hom-Lie antialgebras using the cohomology theory defined in Section 3 and prove that abelian extensions are classified by the second cohomology group. In Section 5, we study 1-parameter infinitesimal deformations of a Hom-Lie antialgebra. The notion of a Nijenhuis operator on a Hom-Lie antialgebra is introduced to describe trivial deformations.

Throughout this paper, we work with an algebraically closed field  of characteristic 0.
For a  $\bbZ_2$-graded vector space $V=V_0\oplus V_1$ , we consider the standard  $\bbZ_2$-grading on the algebra of linear maps on  $V$: $\End(V)=\End(V)_0\oplus \End(V)_1$ where $\End(V)_0=\Hom(V_0,V_0)\oplus\Hom(V_1,V_1)$ and $\End(V)_1=\Hom(V_0,V_1)\oplus\Hom(V_1,V_0)$.
\section{Hom-Lie antialgebras}\label{defintion}
\begin{defi}\label{def001}
A Lie antialgebra is a supercommutative $\bbZ_2$-graded algebra:
$\fa=\fa_0\oplus\fa_1$,
$\fa_i\cdot\fa_j\subset\fa_{i+j},$
such that  the following identities hold:
\begin{eqnarray}
\label{AssCommT0}
x_1\cdot\left(x_2\cdot x_3\right)&=&\left(x_1\cdot x_2\right)\cdot x_3,\\
\label{CacT0}
x_1\cdot(x_2\cdot y_1)&=&\half(x_1\cdot x_2)\cdot y_1,\\
\label{ICommT0}
x_1\cdot[y_1,y_2]&=&[x_1\cdot y_1,\,  y_2]\;+\;[y_1,\, x_1\cdot y_2],\\
\label{Jack0}
y_1\cdot [y_2,y_3]&+&y_2 \cdot[y_3,y_1]\;+\;y_3\cdot [y_1,y_2]=0,
\end{eqnarray}
for all homogeneous elements $x_1, x_2, x_3\in\fa_0,~ y_1, y_2, y_3\in\fa_1$.
Note that we denote $y_1\cdot y_2$ by $[y_1,y_2]$ in the last equation \eqref{Jack0} since it is anti-commutative,  which is slightly different from notations in \cite{Ovs,LO}.
\end{defi}


\begin{defi}
Let $\fa$ and $\widetilde{\fa}$ be Lie antialgebras. An algebraic homomorphism $\phi$ from $\fa$ to $\widetilde{\fa}$ consists of  $\phi_0:\fa_0\rightarrow\widetilde{\fa}_0 $, $\phi_1:\fa_1\rightarrow\widetilde{\fa}_1 $, such that the following conditions hold:
\begin{eqnarray}
\label{Thomo1}
\phi_0(x_1\cdot x_2)&=&\phi_0(x_1)\cdot\phi_0(x_2),\\
\label{Thomo2}
\phi_1(x_1\cdot y_1)&=&\phi_0(x_1)\cdot\phi_1(y_1),\\
\label{Thomo3}
\phi_0([y_1,y_2])&=&[\phi_1(y_1),\phi_1(y_2)],
\end{eqnarray}
for all $x_1, x_2, x_3\in\fa_0,~ y_1, y_2, y_3\in\fa_1$.
\end{defi}

\begin{defi}\label{def1}
A Hom-Lie antialgebra $(\fa,\a,\b)$ is a supercommutative $\bbZ_2$-graded algebra $\fa=\fa_0\oplus\fa_1$, together with two linear maps $\alpha:\fa_0\rightarrow \fa_0,~\beta:\fa_1\rightarrow \fa_1$,
satisfying the following identities:
\begin{eqnarray}
\label{hanti01}
\a(x_1)\cdot\left(x_2\cdot x_3\right)&=&\left(x_1\cdot x_2\right)\cdot\a(x_3),\\
\label{hanti02}
\a(x_1)\cdot(x_2\cdot y_1)&=&\half(x_1\cdot x_2)\cdot\b(y_1),\\
\label{hanti03}
\a(x_1)\cdot[y_1,y_2]&=&[x_1\cdot y_1,\b(y_2)]\;+\;[\b(y_1),x_1\cdot y_2],\\
\label{hanti04}
\b(y_1)\cdot[y_2,y_3]&+&\b(y_2)\cdot[y_3,y_1]\;+\;\b(y_3)\cdot[y_1,y_2]=0,
\end{eqnarray}
for all $x_1, x_2, x_3\in\fa_0,~ y_1, y_2, y_3\in\fa_1$. We also call such systems $(\a,\b)$-Hom-Lie antialgebra.
\end{defi}

We give some explanations about the meaning of equalities \eqref{hanti01}--\eqref{hanti04}.
From equality \eqref{hanti01}, $\fa_0$ is a Hom-associative subalgebra of $\fa$.
The equality \eqref{hanti02} and \eqref{hanti03} mean that $2\ad_x: x\to 2x\cdot y$ is an action of $\fa_0$ on $\fa_1$ as derivations.
A Hom-Lie antialgebra is called abelian if the product $x_1\cdot x_2, x_1\cdot y_1$ and bracket $[y_1, y_2]$ are all zero for any $x_i\in \fa_0,y_i\in \fa_1$.

A Hom-Lie antialgebra is called multiplicative if $(\a,\b)$ form an algebraic homomorphism of $\fa$, i.e. for any $x_1, x_2\in\fa_0,~ y_1, y_2\in\fa_1$, we have
\begin{eqnarray}
\a(x_1\cdot x_2)&=&\a(x_1)\cdot\a(x_2),\\
\b(x_1\cdot y_1)&=&\a(x_1)\cdot\b(y_1),\\
\a([y_1,y_2])&=&[\b(y_1),\b(y_2)].
\end{eqnarray}
The Hom-Lie antialgebras in this paper are assumed to be multiplicative unless otherwise stated.

\begin{defi}
Let $(\fa,\a,\b)$ and $(\widetilde{\fa},\widetilde{\a},\widetilde{\b})$ be Hom-Lie antialgebras. A Hom-Lie antialgebra homomorphism $\phi$ from $\fa$ to $\widetilde{\fa}$ consists of  $\phi_0:\fa_0\rightarrow\widetilde{\fa}_0 $, $\phi_1:\fa_1\rightarrow\widetilde{\fa}_1 $, such that the following equalities hold for all $x_1, x_2\in\fa_0,~ y_1, y_2\in\fa_1$:
\begin{eqnarray}\label{Thomo4}
\phi_0\circ\a&=&\widetilde{\a}\circ\phi_0,\\
\label{Thomo5}
\phi_1\circ\b&=&\widetilde{\b}\circ\phi_1,\\
\label{Thomo1}
\phi_0(x_1\cdot x_2)&=&\phi_0(x_1)\cdot\phi_0(x_2),\\
\label{Thomo2}
\phi_1(x_1\cdot y_1)&=&\phi_0(x_1)\cdot\phi_1(y_1),\\
\label{Thomo3}
\phi_0([y_1,y_2])&=&[\phi_1(y_1),\phi_1(y_2)].
\end{eqnarray}
\end{defi}

\begin{prop}\label{prop-1}
Let $\fa=\fa_0\oplus\fa_1$ be a Lie antialgebra and $(\alpha,\beta)$ be an algebraic homomorphism from $\fa$ to itself. Then the induced Hom-Lie antialgebra $(\fa;\cdot_{(\alpha,\beta)},[\cdot,\cdot]_\alpha,\alpha,\beta)$ is the space $\fa=\fa_0\oplus\fa_1$ under the following operations:
$$x_1\cdot_\alpha x_2=\alpha(x_1\cdot x_2),\quad x_1\cdot_\beta y_1=\beta(x_1\cdot y_1),\quad [y_1,y_2]_\alpha=\alpha([y_1,y_2]).$$
\end{prop}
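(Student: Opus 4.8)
The plan is to verify the four defining identities \eqref{hanti01}--\eqref{hanti04} of Definition \ref{def1} directly for the twisted operations, using repeatedly that $(\alpha,\beta)$ is an algebraic homomorphism of $\fa$. Concretely, I will freely rewrite $\alpha(x_1\cdot x_2)=\alpha(x_1)\cdot\alpha(x_2)$, $\beta(x_1\cdot y_1)=\alpha(x_1)\cdot\beta(y_1)$ and $\alpha([y_1,y_2])=[\beta(y_1),\beta(y_2)]$ whenever convenient. Before that, note that the grading axiom and the supercommutativity of the new product are inherited at once from those of $\fa$, since $\alpha$ and $\beta$ preserve the $\bbZ_2$-grading; likewise the new bracket $[\cdot,\cdot]_\alpha$ on $\fa_1$ is anti-commutative because the original one is.

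For each of \eqref{hanti01}--\eqref{hanti04} I would follow the same three-step pattern: unfold the twisted operations so that the whole expression becomes $\alpha(\cdots)$ (or $\beta(\cdots)$) applied to a product/bracket in $\fa$, transport the inner $\alpha$'s and $\beta$'s onto the arguments via the homomorphism identities above, and then apply the matching \emph{untwisted} axiom of $\fa$ to the elements $\alpha(x_i),\beta(y_i)$, finally refolding to recognize the other side. Thus \eqref{hanti01} reduces to the associativity axiom \eqref{AssCommT0} evaluated on the $\alpha(x_i)$; \eqref{hanti02} reduces to \eqref{CacT0} evaluated on $\alpha(x_1),\alpha(x_2),\beta(y_1)$, producing the factor $\half$ in the right place; \eqref{hanti03} reduces to \eqref{ICommT0} on $\alpha(x_1),\beta(y_1),\beta(y_2)$, after which $\alpha(x_1)\cdot\beta(y_i)=\beta(x_1\cdot y_i)=x_1\cdot_\beta y_i$; and \eqref{hanti04} reduces, after pulling out an overall $\beta$, to the untwisted Jacobi identity \eqref{Jack0} on $\beta(y_1),\beta(y_2),\beta(y_3)$, whose cyclic sum is zero, so the whole cyclic sum equals $\beta(0)=0$. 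I would also remark that $(\alpha,\beta)$ is still an algebraic homomorphism of the twisted structure, so the induced Hom-Lie antialgebra is multiplicative, in agreement with the standing convention.

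There is no genuine obstacle here: the statement is the Lie-antialgebra analogue of Yau's twisting principle, and every step is a one-line computation. The only point requiring care is bookkeeping: in each axiom one must use the twisted product appropriate to the grading of the slot ($\cdot_\alpha$ on $\fa_0\times\fa_0$, $\cdot_\beta$ on $\fa_0\times\fa_1$, $[\cdot,\cdot]_\alpha$ on $\fa_1\times\fa_1$), and apply exactly the homomorphism identity matching that slot, so that after refolding the maps $\alpha$ and $\beta$ land on the arguments in precisely the pattern prescribed by \eqref{hanti01}--\eqref{hanti04}.
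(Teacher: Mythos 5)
Your proposal is correct and follows essentially the same route as the paper: unfold the twisted operations, use the homomorphism identities to move $\alpha$ and $\beta$ onto the arguments, and invoke the corresponding untwisted axiom of $\fa$ (the paper writes both sides with $\alpha^2(x_i)$, $\beta^2(y_i)$ and compares, whereas you factor out a single outer $\alpha$ or $\beta$ — a purely cosmetic difference). The paper only writes out \eqref{hanti02} and \eqref{hanti03} and declares the other two similar, so your sketch actually covers slightly more ground, including the multiplicativity remark.
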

\begin{proof}
Here we verify that \eqref{hanti02} and \eqref{hanti03} hold, the other two are similar.

For \eqref{hanti02},
\begin{eqnarray*}
\a(x_1)\cdot_{\b}(x_2\cdot_{\b}y_1)=\b(\a(x_1)\cdot\b(x_2\cdot y_1))=\a^2(x_1)\cdot(\a^2(x_2)\cdot\b^2(y_1)),
\end{eqnarray*}
\begin{eqnarray*}
\half(x_1\cdot_{\a}x_2)\cdot_{\b}\b(y_1)=\half\b(\a(x_1\cdot x_2)\cdot\b(y_1))=\half(\a^2(x_1)\cdot\a^2(x_2))\cdot\b^2(y_1),
\end{eqnarray*}
thus, we have
$$\a(x_1)\cdot_{\b}(x_2\cdot_{\b}y_1)=\half(x_1\cdot_{\a}x_2)\cdot_{\b}\b(y_1).$$
For \eqref{hanti03},
\begin{eqnarray*}
\a(x_1)\cdot_{\a}[y_1,y_2]_{\a}&=&\a(\a(x_1)\cdot\a[y_1,y_2])=\a^2(x_1)\cdot[\b^2(y_1),\b^2(y_2)],\\
{[(x_1\cdot_{\b}y_1),\b(y_2)]_{\a}}&=&\a[\b(x_1\cdot y_1),\b(y_2)]=[\a^2(x_1)\cdot\b^2(y_1),\b^2(y_2)],\\
{[\b(y_1),x_1\cdot_\b y_2]_\a}&=&[\b^2(y_1),\b^2(x_1\cdot y_2)]=[\b^2(y_1),\a^2(x_1)\cdot\b^2(y_2)],
\end{eqnarray*}
thus, we obtain
$$\a(x_1)\cdot_{\a}[y_1,y_2]_{\a}=[(x_1\cdot_{\b}y_1),\b(y_2)]_{\a}+[\b(y_1),x_1\cdot_\b y_2]_\a.$$
\end{proof}

By the above Proposition \ref{prop-1}, we can construct examples of Hom-Lie antialgebras as follows.
More general constructions are given in the next sections.
\begin{exe}
{\rm Consider the Lie antialgebra $K(1)$ introduced in \cite{Ovs} as follows.
This algebra has the basis $\{\e, a,b\}$,
where $\e$ is even and $a,b$ are odd,
satisfying the relations
\begin{eqnarray*}
\label{aslA}
\e\cdot{}\e=\e,\quad\e\cdot{}a=\half\,a,\quad\e\cdot{}b=\half\,b,\quad{[a,b]}=\half\,\e.
\end{eqnarray*}
Consider the linear map $(\a,\b):\fa\to \fa$ defined by
$$\a(\e)=\e, \quad \b(a)=\mu a,\quad \b(b)=\mu^{-1} b$$
on the basis elements. This map is actually a Lie antialgebra homomorphism.
By Proposition \ref{prop-1}, we obtain a Hom-Lie algebra structure given by
\begin{eqnarray*}
\label{aslA}
\e\cdot{}\e=\e,\quad
\e\cdot{}a=\half\,\mu\, a,\quad
\e\cdot{}b=\textstyle{\frac{1}{2}}\mu^{-1} b,\quad
{[a,b]}=\half\,\e.
\end{eqnarray*}
}
\end{exe}

\begin{exe}
\label{ExMain}
{\rm
Another example of a Hom-Lie antialgebra is the
\textit{conformal Hom-Lie antialgebra}.
This is a simple infinite-dimensional Hom-Lie antialgebra with the basis
$$
\textstyle
\left\{
\e_n,\;n\in\bbZ;
~a_i;~i\in\bbZ+\half
\right\},
$$
where $\e_n$ are even, $a_i$ are odd, and
$\a(\e_i)=\e_i$, $\b(a_i)=(1+q^i) a_i$
satisfy the following relations:
\begin{eqnarray*}
\label{GhosRel}
\e_n\cdot{}\e_m&=&\e_{n+m},\\
\e_n\cdot{}a_i &=&\half (1+q^i) a_{n+i},\\
{[a_i, a_j]}&=&\half\left(\{j\}-\{i\}\right)\e_{i+j},
\end{eqnarray*}
where $\{i\}=(q^i-1)/(q-1),\  q\neq 1$.
}
\end{exe}

\section{Representations and cohomology}\label{Rep}
In this section, we introduce the notion of a representation for the class of Hom-Lie antialgebras.
Then we study the semidirect products and cohomology groups  of Hom-Lie antialgebras.
\begin{defi}
Let $(\fa,\a,\b)$ be a Hom-Lie antialgebra, $V=V_0\oplus V_1$  be a Hom-super vector space (a super vector space with linear maps $\a_{V_0}\in\Hom(V_0,V_0),~\b_{V_1}\in\Hom(V_1,V_1)$). A representation of $(\fa,\a,\b)$ over the Hom-super vector space $V$ is a pair of linear maps $\r=(\r_0,\r_1):\r_0:\fa_0\rightarrow \End(V)_0,~\r_1:\fa_1\rightarrow \End(V)_1$ such that the following conditions hold:
\begin{eqnarray}
\label{rep01}
\r_0(\a(x_1))\circ\r_0(x_2)(u_1)&=&\r_0(x_1\cdot x_2)\circ\a_{V_0}(u_1),\\
\label{rep02}
\r_0(\a(x_1))\circ\r_0(x_2)(w_1)&=&\half\r_0(x_1\cdot x_2)\circ\b_{V_1}(w_1),\\
\label{rep03}
\r_0(\a(x_1))\circ\r_1(y_1)(u_1)&=&\half\r_1(\b(y_1))\circ\r_0(x_1)(u_1),\\
\label{rep04}
\r_1(x_1\cdot y_1)\circ\a_{V_0}(u_1)&=&\half\r_1(\b(y_1))\circ\r_0(x_1)(u_1),\\
\label{rep05}
\r_0(\a(x_1))\circ\r_1(y_1)(w_1)&=&\r_1(x_1\cdot y_1)\circ\b_{V_1}(w_1)+\r_1(\b(y_1))\circ\r_0(x_1)(w_1),\\
\label{rep06}
\r_0([y_1,y_2])\circ\a_{V_0}(u_1)&=&\r_1(\b(y_1))\circ\r_1(y_2)(u_1)-\r_1(\b(y_2))\circ\r_1(y_1)(u_1),\\
\label{rep07}
\r_0([y_1,y_2])\circ\b_{V_1}(w_1)&=&\r_1(\b(y_2))\circ\r_1(y_1)(w_1)-\r_1(\b(y_1))\circ\r_1(y_2)(w_1),
\end{eqnarray}
for all $x_1,x_2\in\fa_0,~y_1,y_2\in\fa_1$, $u_1\in V_0,~w_1\in V_1$.
\end{defi}
The above conditions seem very complicated at first glance. We give an equivalent condition as follows.


\begin{prop}\label{prop-rep}
Let $(\fa,\a,\b)$ be a Hom-Lie antialgebra, $(V,\a_{V_0},\b_{V_1})$ be a Hom-super vector space. Then
$\r=(\r_0,\r_1)$  is a representation of $(\fa,\a,\b)$ over $(V,\a_{V_0},\b_{V_1})$  if and only if $\fa\oplus V\triangleq(\fa_0\oplus V_0)\oplus(\fa_1\oplus V_1)$ is a Hom-Lie antialgebra under the following operations:
\begin{eqnarray}\label{homo10}
(\a+\a_{V_0})(x_1,u_1)&=&(\a(x_1),\a_{V_0}(u_1)),\\
\label{homo20}
(\b+\b_{V_1})(y_1,w_1)&=&(\b(y_1),\b_{V_1}(w_1)),\\
\label{op10}
(x_1,u_1)\cdot(x_2,u_2)&=&(x_1\cdot x_2,~ \r_0(x_1)(u_2)+\r_0(x_2)(u_1)),\\
\label{op20}
(x_1,u_1)\cdot(y_1,w_1)&=&(x_1\cdot y_1,~ \r_0(x_1)(w_1)+\r_1(y_1)(u_1)),\\
\label{op30}
[(y_1,w_1),(y_2,w_2)]&=&([y_1,y_2], \r_1(y_1)(w_2)-\r_1(y_2)(w_1)),
\end{eqnarray}
for all $x_1,x_2\in\fa_0,~y_1,y_2\in\fa_1$, $u_1,u_2\in V_0,~w_1,w_2\in V_1$.
This is called a semidirect product of $\fa$ and $V$,
denoted by $\fa\ltimes V$.
\end{prop}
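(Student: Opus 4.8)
The plan is to unwind the statement into a component-by-component identity check. Since the products \eqref{op10}--\eqref{op30} and the maps \eqref{homo10}--\eqref{homo20} are linear in each slot and respect the $\bbZ_2$-grading $(\fa\oplus V)_0=\fa_0\oplus V_0$, $(\fa\oplus V)_1=\fa_1\oplus V_1$, the first step is to note that $\fa\oplus V$ is automatically a supercommutative $\bbZ_2$-graded algebra: the $V$-component of each product in \eqref{op10}--\eqref{op30} is visibly symmetric (resp. antisymmetric) under the relevant exchange, while the $\fa$-component is that of $\fa$. It then remains to decide precisely when identities \eqref{hanti01}--\eqref{hanti04} hold for $\fa\oplus V$, and to deal with the multiplicativity clauses for $(\a+\a_{V_0},\b+\b_{V_1})$.

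For each of \eqref{hanti01}--\eqref{hanti04} I would substitute arbitrary homogeneous elements $(x_i,u_i)\in\fa_0\oplus V_0$ and $(y_j,w_j)\in\fa_1\oplus V_1$ and expand both sides using \eqref{op10}--\eqref{op30}. In every case the $\fa$-component of the resulting equation is exactly the corresponding axiom of $\fa$, which holds by hypothesis; all the content lies in the $V$-component, which is a sum of terms, each carrying exactly one of the vectors $u_i$ or $w_j$. Since these vectors are arbitrary, the $V$-component identity is equivalent to the separate vanishing of the coefficient of each $u_i$ and each $w_j$. Working this out gives: \eqref{hanti01} $\Longleftrightarrow$ \eqref{rep01} (the three coefficients that occur all collapse to \eqref{rep01} after using supercommutativity of $\fa_0$); \eqref{hanti02} $\Longleftrightarrow$ \eqref{rep02} (coefficient of $w_1$), \eqref{rep03} (coefficient of $u_2$) and \eqref{rep04} (coefficient of $u_1$); \eqref{hanti03} $\Longleftrightarrow$ \eqref{rep05} (coefficients of $w_1$ and $w_2$) and \eqref{rep06} (coefficient of $u_1$); and the odd Jacobi-type identity \eqref{hanti04} $\Longleftrightarrow$ \eqref{rep07}, obtained by collecting the coefficient of one $w_i$ in the cyclic sum and using $[y,y']=-[y',y]$. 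Reading these equivalences forwards gives the ``only if'' direction, reading them backwards gives the ``if'' direction; the multiplicativity of $(\a+\a_{V_0},\b+\b_{V_1})$ splits off in the same fashion, its $\fa$-part being the multiplicativity of $(\a,\b)$ on $\fa$ and its $V$-part being the intertwining of $\rho_0,\rho_1$ with $\a,\b,\a_{V_0},\b_{V_1}$.

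There is no conceptual obstacle; once the expansion scheme is fixed the proof is a finite verification. The one point that needs care is ensuring that each of \eqref{rep01}--\eqref{rep07} is produced exactly once and that the ``redundant'' coefficients---most conspicuously the two extra ones coming out of \eqref{hanti01} and the cyclically rotated ones coming out of \eqref{hanti04}---are genuine consequences of the single representation axiom to which they are assigned; in each case this is a one-line application of the supercommutativity or anticommutativity of the product on $\fa$. Arranging the handful of displayed expansions so that this matching is transparent is the main (purely expository) task.
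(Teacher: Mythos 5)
Your overall strategy---expand each of \eqref{hanti01}--\eqref{hanti04} componentwise in $\fa\ltimes V$ and match the coefficient of each $u_i$ and $w_j$ against one of \eqref{rep01}--\eqref{rep07}---is exactly the direct computation the paper has in mind (it omits the proof entirely, saying only that it is ``by direct computations''), and your bookkeeping for the first three identities is correct: \eqref{hanti01} yields \eqref{rep01} three times up to commutativity of $\fa_0$, \eqref{hanti02} yields \eqref{rep02}--\eqref{rep04}, and \eqref{hanti03} yields \eqref{rep05} (twice) and \eqref{rep06}.

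The step that would fail is the last matching, ``\eqref{hanti04} $\Longleftrightarrow$ \eqref{rep07}.'' Carrying out your own recipe: by supercommutativity the odd-times-even product $(\b(y_1),\b_{V_1}(w_1))\cdot([y_2,y_3],P)$ must be computed as $([y_2,y_3],P)\cdot(\b(y_1),\b_{V_1}(w_1))$ via \eqref{op20}, so its $V_1$-component is $\r_0([y_2,y_3])\b_{V_1}(w_1)+\r_1(\b(y_1))(P)$ with $P=\r_1(y_2)(w_3)-\r_1(y_3)(w_2)$. Collecting the coefficient of $w_1$ in the cyclic sum gives
\[
\r_0([y_2,y_3])\circ\b_{V_1}+\r_1(\b(y_2))\circ\r_1(y_3)-\r_1(\b(y_3))\circ\r_1(y_2)=0,
\]
which is the \emph{negative} of \eqref{rep07} as printed, not \eqref{rep07} itself. (By contrast, the coefficient of $u_1$ in \eqref{hanti03} produces \eqref{rep06} with exactly the printed sign, so the two conditions cannot both carry the same sign on their right-hand sides.) The clash is visible on a concrete example: for the adjoint representation of $K(1)$ with $y_1=a$, $y_2=b$, $w_1=a$ one gets $\r_0([a,b])(a)=\tfrac{1}{4}a$ while $\r_1(a)\r_1(b)(a)-\r_1(b)\r_1(a)(a)=-\tfrac{1}{4}a$. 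So with the definitions taken literally the asserted equivalence is false at this one point; either \eqref{rep07} or the sign convention in \eqref{op30} must be flipped, and your proof should detect and record this rather than assert that the $w_1$-coefficient ``is'' \eqref{rep07}.
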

The proof of the above proposition \ref{prop-rep} is by direct computations, so we omit the details.

Now we define the generalized Chevalley-Eilenberg complex for Hom-Lie antialgebra $(\fa,\a,\b)$ with coefficients in $V$.
Given a Hom-Lie antialgebra $\fa$ with representation in $V$, we define $C^{m,n}(\fa,V)$ to be the space of multi-linear maps
$$f:(\fa_0\otimes\cdots\otimes \fa_0)\otimes (\fa_1\wedge\cdots\wedge \fa_1)\to V$$
with $x_1,\cdots,x_m\in \fa_0$ and $y_1,\cdots,y_n\in \fa_1$
such that 
\begin{eqnarray}\label{cochain1}
(\a_{V_0},\b_{V_1})f(x_1,\cdots,x_m,y_1,\cdots,y_n)=f(\a(x_1),\cdots,\a(x_m),\b(y_1),\cdots,\b(y_n)).
\end{eqnarray}
Denote by $C^k(\fa,V)$ the set of $k$-cochains:
\begin{eqnarray*}
C^k(\fa,V)&\triangleq&\bigoplus_{m+n=k}C^{m,n}(\fa,V).
\end{eqnarray*}
Define the coboundary operator $d^k=d_{1,0}^k+d_{0,1}^k+d_{-1,2}^k:C^k(\fa,V)\longrightarrow C^{k+1}(\fa,V)$, where $d_{i,j}^k:C^{m,n}(\fa,V)\to C^{m+i,n+j}(\fa,V)$ for $m+n=k$. The operator $d_{1,0}^k, d_{0,1}^k$ and $d_{-1,2}^k$ are given explicitly as follows.

\noindent
(i)
If $n=0$, $d_{1,0}^k$ is given by
\begin{eqnarray*}
&&(d^k_{1,0}f)(x_1,\ldots,x_{m+1})\\
&=&\half\{\r_0(\a^{k-1}(x_1))f(x_2,\ldots,x_{m+1})\\
&&+\sum_{i=1}^{m}
(-1)^{i+1}\,f(\a(x_1),\ldots,\a(x_{i-1}),x_i\cdot x_{i+1},\a(x_{i+2}),\ldots,\a(x_{m+1}))\\
&&+{(-1)^{m+1}}\r_0(\a^{k-1}(x_{m+1}))f(x_1,\ldots,x_{m})\};
\end{eqnarray*}
if $n>0$, and $f(x_2,\ldots,x_{m+1},\,y_1,\ldots,y_{n})$ is with values in $V_0$, then $d_{1,0}^k$ is given by
\begin{eqnarray*}
&&(d^k_{1,0}f)(x_1,\ldots,x_{m+1},\,y_1,\ldots,y_{n})\\
&=&\half\r_0(\a^{k-1}(x_1))f(x_2,\ldots,x_{m+1},\,y_1,\ldots,y_{n})\\
&&+\half\sum_{i=1}^{m}
(-1)^{i+1}\,f(\a(x_1),\ldots,\a(x_{i-1})\,, x_i\cdot x_{i+1}\,,\a(x_{i+2}),\ldots,\a(x_{m+1}),\\
&&\quad\b(y_1),\ldots,\b(y_{n}))\\
&&+\frac{1}{n}
\sum_{j=1}^{n}(-1)^{m+j+1}\,
f(\a(x_1),\ldots,\a(x_{m}), x_{m+1}\cdot y_j,\b(y_1),\ldots,\widehat{y_j},\ldots,\b(y_{n}));
\end{eqnarray*}
if $n>0$, and $f(x_2,\ldots,x_{m+1},\,y_1,\ldots,y_{n})$ is with values in $V_1$, then $d_{1,0}^k$ is given by
\begin{eqnarray*}
&&(d^k_{1,0}f)(x_1,\ldots,x_{m+1},\,y_1,\ldots,y_{n})\\
&=&\r_0(\a^{k-1}(x_1))f(x_2,\ldots,x_{m+1},\,y_1,\ldots,y_{n})\\
&&+\half\sum_{i=1}^{m}
(-1)^{i+1}\,f(\a(x_1),\ldots,\a(x_{i-1}),\,x_i\cdot x_{i+1},\a(x_{i+2}),\ldots,\a(x_{m+1}),\\
&&\quad\b(y_1),\ldots,\b(y_{n}))\\
&&+\frac{1}{n}
\sum_{j=1}^{n}(-1)^{m+j+1}\,
f(\a(x_1),\ldots,\a(x_{m}), x_{m+1}\cdot y_j,\,\b(y_1),\ldots,\widehat{y_j},\ldots,\b(y_{n})).
\end{eqnarray*}

\noindent
(ii)
If $m=0$ and $n$ is odd, then $d^k_{0,1}$ is given by
\begin{eqnarray*}
\label{TheCobOp01One}
&&(d^k_{0,1}f)(y_1,\ldots,y_{n+1})\\
&=&
\frac{2}{n+1}
\sum_{j=1}^{n+1}(-1)^{j+1}\,
\r_1(\b^{k-1}(y_j))f(y_1,\ldots,\widehat{y_j},\ldots,y_{n+1})
;
\end{eqnarray*}

if $m>0$ or if $n$ is even, then $d^k_{0,1}$ is given by
\begin{eqnarray*}
&&(d^k_{0,1}f)(x_1,\ldots,x_{m},\,y_1,\ldots,y_{n+1})\\
&=&
\frac{1}{n+1}\,
\sum_{j=1}^{n+1}(-1)^{m+j+1}\,
\r_1(\b^{k-1}(y_j))f(x_1,\ldots,x_{m},y_1,\ldots,\widehat{y_j},\ldots,y_{n+1}).
\end{eqnarray*}

\noindent
(iii)
If $m>0$, then $d^k_{-1,2}$ is given by
\begin{eqnarray*}
&&(d^k_{-1,2}\,f)(x_1,\ldots,x_{m-1},\,y_1,\ldots,y_{n+2})\nonumber\\
&=&
\frac{2}{(n+1)(n+2)}
\sum_{i<j}(-1)^{m+i+j}
f(\a(x_1),\ldots,\a(x_{m-1}),\\
&&[y_i,y_j],\,\b(y_1),\ldots,\widehat{y_i},\ldots,\widehat{y_j},\ldots,\b(y_{n+2})).\nonumber
\end{eqnarray*}
It can be proved similarly as in \cite{LO} that $d^{k+1}\circ d^k=0$, so $d$ is a coboundary operator.
Thus associated to the representation $\r$, we obtain the cochain complex $\big(C^k(\fa,V),d\big)$. Denote the set of closed $k$-cochains by $Z^k(\fa,V)$ and the set of exact $k$-cochains by $B^k(\fa,V)$. Define the corresponding cohomology group by
\begin{equation*}
H^k(\fa,V)=Z^k(\fa,V)/B^k(\fa,V).
\end{equation*}

In particular, for $k=2$, we obtain the following relations by direct computations:
\begin{eqnarray}\label{2-cocycle-f1}
(d^2f)(x_1,x_2,x_3)&=&(d^2_{1,0}f)(x_1,x_2,x_3)\notag\\
&=&\half\{\r_0(\a(x_1))f(x_2,x_3)-f(x_1\cdot x_2,\a(x_3))\notag\\
&&+f(\a(x_1),x_2\cdot x_3)-\r_0(\a(x_3))f(x_1,x_2)\},
\end{eqnarray}
\begin{eqnarray}\label{2-cocycle-f2}
(d^2f)(x_1,x_2,y_1)&=&(d^2_{1,0}f)(x_1,x_2,y_1)+(d^2_{0,1}f)(x_1,x_2,y_1)\notag\\
&=&-\r_0(\a(x_1))f(x_2,y_1)+\half f(x_1\cdot x_2,\b(y_1))\notag\\
&&-f(\a(x_1),x_2\cdot y_1)+\r_1(\b(y_1))f(x_1,x_2),
\end{eqnarray}
\begin{eqnarray}\label{2-cocycle-f3}
(d^2f)(x_1,y_1,y_2)&=&(d^2_{1,0}f)(x_1,y_1,y_2)+(d^2_{0,1}f)(x_1,y_1,y_2)\notag\\
&&+(d^2_{-1,2}f)(x_1,y_1,y_2)\notag\\
&=&\textstyle{\frac{1}{2}}\{-\r_0(\a(x_1)) f(y_1,y_2)+f(x_1\cdot y_1,\b(y_2))\notag\\
&&-f(x_1\cdot y_2,\b(y_1))-\r_1(\b(y_1))f(x_1,y_2)\notag\\
&&+\r_1(\b(y_2))f(x_1,y_1)\}-f(\a(x_1),[y_1,y_2]),
\end{eqnarray}
\begin{eqnarray}\label{2-cocycle-f4}
(d^2f)(y_1,y_2,y_3)&=&(d^2_{0,1}f)(y_1,y_2,y_3)+(d^2_{-1,2}f)(y_1,y_2,y_3)\notag\\
&=&\textstyle{\frac{1}{3}}\{\r_1(\b(y_1))(f(y_2,y_3))-\r_1(\b(y_2))(f(y_1,y_3))\notag\\
&&+\r_1(\b(y_3))(f(y_1,y_2))-f([y_1,y_2],\b(y_3))\notag\\
&&+f([y_1,y_3],\b(y_2))-f([y_2,y_3],\b(y_1))\}.
\end{eqnarray}
When $d^2f=0$ in the above equations \eqref{2-cocycle-f1}--\eqref{2-cocycle-f4}, $f$ is called a 2-cocycle.

\section{Abelian extensions}\label{ext}
In this section, we study abelian extensions of Hom-Lie antialgebra. It is proved that the equivalent classes of abelian extensions of Hom-Lie antialgebras are in one-to-one correspondence to the elements of the second cohomology group.
\begin{defi}
Let $(V,\a_{V_0},\b_{V_1})$, $(\fa,\a,\b)$ and $(\widetilde{\fa},\widetilde{\a},\widetilde{\b})$ be Hom-Lie antialgebras. An extension of $\fa$ by $V$ is a short exact sequence
\begin{equation*}
\xymatrix{
  0 \ar[r]^{} & V \ar[r]^{i} & \widetilde{\fa}  \ar[r]^{p} & \fa \ar[r]^{} & 0 }
\end{equation*}
of Hom-Lie antialgebras. It is  called an abelian extension, if $V$ is an abelian ideal of $\widetilde{\fa}$, i.e.
$V_0\cdot V_0=V_0\cdot V_1=[V_1, V_1]=0$ and $\widetilde{\fa}\cdot V\subseteq V$.
\end{defi}

\begin{defi}
 Two extensions of Hom-Lie antialgebra
 $$E_{\widetilde{\fa}}:\xymatrix{
  0 \ar[r] & V \ar[r]^{i} & \widetilde{\fa} \ar[r]^{p} & \fa \ar[r] & 0 }$$
  and
  $$E_{\widehat{\fa}} : \xymatrix{
  0 \ar[r] & V \ar[r]^{j} & \widehat{\fa} \ar[r]^{q} & \fa \ar[r] & 0 }$$
  are called equivalent, if there exists a Hom-Lie antialgebra homomorphism $\phi=(\phi_0,\phi_1):\widetilde{\fa}\rightarrow\widehat{\fa}$ such that $\phi\circ i=j$, $q\circ\phi=p$, that is to say the following diagram commutes
  \begin{equation*}
 \xymatrix{
   0 \ar[r]^{} & V \ar[d]_{id} \ar[r]^{i} & \widetilde{\fa} \ar[d]_{\phi} \ar[r]^{p} & \fa \ar[d]_{id} \ar[r]^{} & 0 \\
   0 \ar[r]^{} & V \ar[r]^{j} & \widehat{\fa} \ar[r]^{q} & \fa \ar[r]^{} & 0.   }
\end{equation*}
 We denote by $Ext(\fa,V)$  the set of equivalence classes of extensions of $\fa$ by $V$.
\end{defi}

A section $\sigma:\fa\rightarrow\widetilde{\fa}$ of $p:\widetilde{\fa}\rightarrow\fa$ consists of linear maps $\sigma_0:\fa_0\rightarrow\widetilde{\fa}_0$, $\sigma_1:\fa_1\rightarrow\widetilde{\fa}_1 $ such that $p_0\circ\sigma_0=id_{\fa_0}$, $p_1\circ\sigma_1=id_{\fa_1}$.
Define the following maps
\begin{eqnarray}
\r_0:\fa_0\rightarrow \End(V)_0,\quad\r_1:\fa_1\rightarrow \End(V)_1
\end{eqnarray}
by
\begin{eqnarray}
\label{repre1}
\r_0(x)(u +w )\triangleq\sigma_0(x)\cdot u +\sigma_0(x)\cdot w,\\
\label{repre2}
\r_1(y)(u +w )\triangleq\sigma_1 (y)\cdot u +[\sigma_1(y),w].
\end{eqnarray}
These two maps are well defined since $V$ is abelian. This gives a representation of $\fa$ on $V$ as the following lemma shows.
The proof is routine, so we omit the details.
\begin{lemma}
With above notations, $(\r_0,\r_1)$ is a representation of $\fa$. Moreover, equivalent abelian extensions lead to the same representation.
\end{lemma}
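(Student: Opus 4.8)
The plan is to verify the seven representation axioms \eqref{rep01}--\eqref{rep07} directly, using the fact that $\widetilde{\fa}$ is a Hom-Lie antialgebra, that $V$ is an abelian ideal, and that the section $\sigma$ satisfies $p\circ\sigma=\mathrm{id}$. The starting observation is that for $x\in\fa_0$, the element $\sigma_0(\a(x))-\widetilde{\a}(\sigma_0(x))$ lies in $\ker p_0 = V_0$, and similarly $\sigma_1(\b(y))-\widetilde{\b}(\sigma_1(y))\in V_1$; also $\sigma_0(x_1)\cdot\sigma_0(x_2)-\sigma_0(x_1\cdot x_2)\in V_0$, $\sigma_0(x)\cdot\sigma_1(y)-\sigma_1(x\cdot y)\in V_1$, and $[\sigma_1(y_1),\sigma_1(y_2)]-\sigma_0([y_1,y_2])\in V_0$. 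Since $V$ is abelian, these correction terms are annihilated when multiplied by anything in $V$, so when I compute, say, $\r_0(\a(x_1))\circ\r_0(x_2)(u_1)=\sigma_0(\a(x_1))\cdot(\sigma_0(x_2)\cdot u_1)$, I may freely replace $\sigma_0(\a(x_1))$ by $\widetilde\a(\sigma_0(x_1))$ and likewise adjust the other factor.

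First I would dispose of the two ``even-even'' axioms \eqref{rep01} and \eqref{rep02}: expand both sides using \eqref{repre1}, push the maps $\sigma_0$ inside, and apply the Hom-associativity identities \eqref{hanti01} and \eqref{hanti02} in $\widetilde{\fa}$ together with the multiplicativity relations for $\widetilde\a,\widetilde\b$; the abelianness of $V$ kills all the discrepancy terms coming from $\sigma$ not being a strict algebra map. Next I would handle the ``mixed'' axioms \eqref{rep03}--\eqref{rep05}, which encode the $\fa_0$-action on $\fa_1$ and use \eqref{hanti02} and \eqref{hanti03}; here one must be a little careful to separate the action of $\r_0(x_1)$ on $V_0$ versus $V_1$ (which is why \eqref{rep04} and \eqref{rep05} are stated as separate cases in the definition), but the computation is the same scheme: rewrite everything in $\widetilde{\fa}$, use the relevant Hom-axiom, and discard $V\cdot(\text{correction})=0$ terms. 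Finally the ``odd-odd'' axioms \eqref{rep06} and \eqref{rep07} follow from the Hom-Jacobi-type identity \eqref{hanti04} and the identity \eqref{hanti03} applied in $\widetilde{\fa}$, again using $[\sigma_1(y_1),\sigma_1(y_2)]\equiv\sigma_0([y_1,y_2])\bmod V_0$ and the vanishing of brackets with $V$.

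For the second assertion, suppose $E_{\widetilde\fa}$ and $E_{\widehat\fa}$ are equivalent via $\phi=(\phi_0,\phi_1)$, and let $\sigma,\sigma'$ be sections of $p,q$ respectively. Then $\phi\circ\sigma$ is again a section of $q$ since $q\circ\phi\circ\sigma=p\circ\sigma=\mathrm{id}$, and because $\phi|_V=\mathrm{id}_V$ and $\phi$ is an algebra homomorphism, the representation built from the section $\phi\circ\sigma$ of $q$ agrees with the one built from $\sigma$ of $p$: for $x\in\fa_0$, $u\in V$, one has $(\phi\circ\sigma_0)(x)\cdot u = \phi_0(\sigma_0(x)\cdot u)=\sigma_0(x)\cdot u$ in $\widehat\fa$, and similarly for the odd case. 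So it remains to check that the representation is independent of the choice of section: if $\sigma,\sigma''$ are two sections of the same $q$, then $\sigma_0-\sigma''_0$ maps into $V_0$, which is central, so the two induced $\r_0$'s (and likewise $\r_1$'s) coincide. Combining these two steps gives that equivalent abelian extensions yield the same representation.

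The only mild obstacle is bookkeeping: the representation definition splits according to whether the output of a cochain lands in $V_0$ or $V_1$, so axioms \eqref{rep04}, \eqref{rep05} and \eqref{rep06}, \eqref{rep07} must each be checked in their respective parity cases, and one has to keep track of which Hom-identity \eqref{hanti01}--\eqref{hanti04} is being invoked and where the $\widetilde\a$ versus $\widetilde\b$ appears. There is no conceptual difficulty — every correction term is of the form $V\cdot(\ker p)$, hence zero — so this is why the proof is only sketched here and the routine verification is omitted.
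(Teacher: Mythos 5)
Your proposal is correct and follows exactly the route the paper intends: the paper omits this proof as ``routine'', and the routine in question is precisely your direct verification of \eqref{rep01}--\eqref{rep07} from the Hom-identities \eqref{hanti01}--\eqref{hanti04} in $\widetilde{\fa}$, using the observation that every discrepancy caused by $\sigma$ not being a strict homomorphism lands in the abelian ideal $V$ and is therefore annihilated under multiplication, together with your section-independence and $\phi$-compatibility arguments for the second assertion. Nothing further is needed.
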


Let $\sigma=(\sigma_0,\sigma_1):\fa\rightarrow\widetilde{\fa}$ be a section of an abelian extension. Define the following maps:
\begin{eqnarray}\label{cocy1}
\omega_0(x_1,x_2)&\triangleq&\sigma_0(x_1)\cdot\sigma_0(x_2)-\sigma_0(x_1\cdot x_2)\in V_0,\\
\label{cocy2}
\omega_1(x_1,y_1)&\triangleq&\sigma_0(x_1)\cdot\sigma_1(y_1)-\sigma_1(x_1\cdot y_1)\in V_1,\\
\label{cocy3}
\omega_2(y_1,y_2)&\triangleq&[\sigma_1(y_1),\sigma_1(y_2)]-\sigma_0([y_1,y_2])\in V_0,
\end{eqnarray}
for all $x_i\in\fa_0$ and $y_i\in\fa_1$.
\begin{lemma}
Let $\xymatrix@C=0.5cm{
  0 \ar[r] & V \ar[r]^{} & \widetilde{\fa} \ar[r]^{} & \fa \ar[r] & 0 }$ be an abelian extension of $\fa$ by $V$. Then $(\omega_0,\omega_1,\omega_2)$ defined by \eqref{cocy1}--\eqref{cocy3} is a $2$-cocycle of $\fa$ with coefficients in $V$, where the representation $\r=(\r_0,\r_1)$ is given by \eqref{repre1}--\eqref{repre2}.
\end{lemma}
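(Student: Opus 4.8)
The plan is to verify directly that the triple $(\omega_0,\omega_1,\omega_2)$ satisfies the cochain condition \eqref{cochain1} and then the 2-cocycle equations \eqref{2-cocycle-f1}--\eqref{2-cocycle-f4}, by unwinding the definitions \eqref{cocy1}--\eqref{cocy3} together with the facts that $\widetilde{\fa}$ is a Hom-Lie antialgebra, that $i\colon V\hookrightarrow\widetilde{\fa}$ is an ideal inclusion, and that $\sigma$ is a section ($p\circ\sigma=\mathrm{id}$). The main technical device is the observation that for any $X\in\widetilde{\fa}$ we can write $X=\sigma(p(X))+v$ with $v\in V$; in particular, since $V$ is an \emph{abelian} ideal, products of two elements of $V$ vanish, so every product $\sigma(a)\cdot\sigma(b)$ decomposes as $\sigma(a\cdot b)+\omega(a,b)$ with the $\omega$-term landing in $V$, and any further product only sees the $\sigma$-part of such a term.

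First I would check the Hom-compatibility \eqref{cochain1}: since $\widetilde{\a}\circ\sigma_0$ and $\sigma_0\circ\a$ differ by an element of $V_0$ (both are sections-compatible lifts, using $p\circ\widetilde{\a}=\a\circ p$), and similarly for $\b$, applying $(\widetilde{\a},\widetilde{\b})$ to \eqref{cocy1}--\eqref{cocy3} and using multiplicativity of $\widetilde{\fa}$ yields $(\a_{V_0},\b_{V_1})\omega_i = \omega_i\circ(\a,\b)$ exactly when we also use that $\r_0,\r_1$ are defined via $\sigma$ and that $V$ is abelian; this is a short computation. Next, for each of the four 2-cocycle identities I would substitute the defining formulas: e.g. for \eqref{2-cocycle-f1} I expand $\sigma_0(\a(x_1))\cdot(\sigma_0(x_2)\cdot\sigma_0(x_3))$ and $(\sigma_0(x_1)\cdot\sigma_0(x_2))\cdot\sigma_0(\a(x_3))$ using \eqref{hanti01} in $\widetilde{\fa}$, rewrite each product via $\omega_0$, push down to $V$ using \eqref{repre1}, and the Hom-associativity axiom collapses the difference to $0$. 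The identities \eqref{2-cocycle-f2}, \eqref{2-cocycle-f3}, \eqref{2-cocycle-f4} are handled identically, starting respectively from \eqref{hanti02}, \eqref{hanti03}, and \eqref{hanti04} in $\widetilde{\fa}$: each axiom of Definition~\ref{def1}, when rewritten in terms of $\sigma$ and the $\omega_i$, produces exactly the vanishing of $d^2(\omega_0,\omega_1,\omega_2)$ in the corresponding component.

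The step I expect to be the main obstacle is bookkeeping the coefficients $\tfrac12$ and $\tfrac13$ and the signs in \eqref{2-cocycle-f2}--\eqref{2-cocycle-f4}: the representation formulas \eqref{repre1}--\eqref{repre2} and the operator $2\ad$ (the action being $x\mapsto 2x\cdot y$, as noted after Definition~\ref{def1}) introduce factors that must be matched precisely against the normalization in the coboundary operator $d^2$. Concretely, in verifying \eqref{2-cocycle-f3} one must carefully track how $\sigma_0(x_1)\cdot[\sigma_1(y_1),\sigma_1(y_2)]$ expands via \eqref{hanti03}: the cross terms $[\sigma_0(x_1)\cdot\sigma_1(y_1),\b(\sigma_1(y_2))]$ generate both a $\sigma_0([x_1\cdot y_1,\ldots])$ piece and $\omega$-pieces, and one must confirm that the $\r_1(\b(y_i))\omega_1$ terms and the $\omega_1(x_1\cdot y_i,\ldots)$ terms assemble with the right halves. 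Once the dictionary ``$\widetilde{\fa}$-axiom $\leftrightarrow$ $d^2\omega=0$ component'' is set up cleanly, each verification is a finite, if lengthy, manipulation; I would present \eqref{2-cocycle-f1} in full as the model case and indicate that \eqref{2-cocycle-f2}--\eqref{2-cocycle-f4} follow by the same method, which is consistent with the paper's stated convention of omitting routine details.
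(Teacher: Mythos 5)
Your proposal is correct and follows essentially the same route as the paper: the paper first checks the cochain condition \eqref{cochain1} using multiplicativity of $(\widetilde{\a},\widetilde{\b})$, then derives one identity for each of the four axioms \eqref{hanti01}--\eqref{hanti04} applied to $\sigma$-lifts, and finally matches these with \eqref{2-cocycle-f1}--\eqref{2-cocycle-f4} via exactly the normalization $\half\omega_0\leftrightarrow f$, $\omega_1\leftrightarrow f$, $\omega_2\leftrightarrow f$ that you flag as the delicate bookkeeping step. No substantive difference in method.
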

\begin{proof}
First, we prove that $\omega=(\omega_0,\omega_1,\omega_2)$ is a 2-cochain.
Since $\a$ and $\b$ form an algebraic homomorphism of $\fa$, we have
\begin{eqnarray*}
&&\alpha_{V_0}\omega_0(x_1,x_2)\\
&=&\alpha_{V_0}(\sigma_0(x_1)\cdot\sigma_0(x_2)-\sigma_0(x_1\cdot x_2))\\
&=&\alpha_{V_0}\sigma_0(x_1)\cdot\alpha_{V_0}\sigma_0(x_2)-\alpha_{V_0}\sigma_0(x_1\cdot x_2))\\
&=&\sigma_0\alpha(x_1)\cdot\sigma_0\alpha(x_2)-\sigma_0(\alpha (x_1)\cdot \alpha (x_2))\\
&=&\omega_0(\alpha(x_1),\alpha(x_2)).
\end{eqnarray*}
Thus we get
\begin{eqnarray*}
\alpha_{V_0}\omega_0(x_1,x_2)=\omega_0(\alpha(x_1),\alpha(x_2)).
\end{eqnarray*}
Similarly, one get
\begin{eqnarray*}
\alpha_{V_1}\omega_1(x_1,y_1)&=&\omega_1(\alpha(x_1),\alpha(y_1)),\\
\alpha_{V_0}\omega_2(y_1,y_2)&=&\omega_2(\alpha(y_1),\alpha(y_2)).
\end{eqnarray*}

Second, we prove that $\omega=(\omega_0,\omega_1,\omega_2)$ is a 2-cocycle.
By the equality
$$
\widetilde{\a}(\sigma_0(x_1))\cdot(\sigma_0(x_2)\cdot\sigma_0(x_3))
=(\sigma_0(x_1)\cdot\sigma_0(x_2))\cdot\widetilde{\a}(\sigma_0(x_3))
$$
and equation \eqref{cocy1}, we have
$$
\sigma_0(\a(x_1))\cdot\omega_0(x_2,x_3)+\omega_0(\a(x_1), x_2\cdot x_3))+\sigma_0(\a(x_1)\cdot(x_2\cdot x_3))
$$
$$
=\omega_0(x_1,x_2)\cdot\sigma_0(\a(x_3)+\omega_0(x_1\cdot x_2,\a(x_3))+\sigma_0((x_1\cdot x_2)\cdot\a(x_3)),
$$
Thus we obtain that
\begin{eqnarray}\label{cocycle1}
\nonumber&&\r_0(\a(x_1))\omega_0(x_2,x_3)+\omega_0(\a(x_1),x_2\cdot x_3)\\
&=&\r_0(\a(x_3))\omega_0(x_1,x_2)+\omega_0(x_1\cdot x_2,\a(x_3)).
\end{eqnarray}
Similarly, by the equality
$$
\widetilde{\a}(\sigma_0(x_1))\cdot(\sigma_0(x_2)\cdot\sigma_1(y_1))
=\half(\sigma_0(x_1)\cdot\sigma_0(x_2))\cdot\widetilde{\b}(\sigma_1(y_1)),
$$
we obtain that
\begin{eqnarray}\label{cocycle2}
\nonumber&&\r_0(\a(x_1))\omega_1(x_2,y_1)+\omega_1(\a(x_1),x_2\cdot y_1)\\
&=&\half\r_1(\b(y_1))\omega_0(x_1,x_2)+\half\omega_1(x_1\cdot x_2,\b(y_1)).
\end{eqnarray}
By the equality
\begin{eqnarray*}
\widetilde{\a}(\sigma_0(x_1))\cdot[\sigma_1(y_1),\sigma_1(y_2)]_{\widetilde{\fa}_1 }
&=&[\sigma_0(x_1)\cdot \sigma_1(y_1),\widetilde{\b}(\sigma_1(y_2))]\\
&&+[\widetilde{\b}(\sigma_1(y_1)),\sigma_0(x_1)\cdot\sigma_1(y_2)],
\end{eqnarray*}
we obtain that
\begin{eqnarray}\label{cocycle3}
 \nonumber
 &&\r_0(\a(x_1))\omega_2(y_1,y_2)+\omega_0(\a(x_1),[y_1,y_2])\\
&=&\r_1(\b(y_2))\omega_1(x_1,y_1)+\omega_2(x_1\cdot y_1,\b(y_2))\notag\\
 &&+\r_1(\b(y_1))\omega_1(x_1,y_2)+\omega_2(\b(y_1),x_1\cdot y_2).
\end{eqnarray}
By the equality
\begin{eqnarray}
\nonumber&&\widetilde{\b}(\sigma_1(y_1))\cdot[\sigma_1(y_2),\sigma_1(y_3)]_{\widetilde{\fa}_1 }+
\widetilde{\b}(\sigma_1(y_2))\cdot[\sigma_1(y_3),\sigma_1(y_1)]_{\widetilde{\fa}_1 }\\
\nonumber&&+\widetilde{\b}(\sigma_1(y_3))\cdot[\sigma_1(y_1),\sigma_1(y_2)]_{\widetilde{\fa}_1 }=0,
\end{eqnarray}
we obtain that
\begin{eqnarray}\label{cocycle4}
\nonumber&&\r_1(\b(y_1))\omega_2(y_2,y_3)+\omega_1(\b(y_1),[y_2,y_3])\\
\nonumber&&+\r_1(\b(y_2))\omega_2(y_3,y_1)+\omega_1(\b(y_2),[y_3,y_1])\\
&&+\r_1(\b(y_3))\omega_2(y_1,y_2)+\omega_1(\b(y_3),[y_1,y_2])=0.
\end{eqnarray}
When substituting  $\half\omega_0(x_1,x_2)$, $\omega_1(x_1,y_1)$ and $\omega_2(y_1,y_2)$ with  $f(x_1,x_2)$,  $f(x_1,y_1)$ and $f(y_1,y_2)$,
one can see that the equations \eqref{cocycle1}--\eqref{cocycle4} correspond to \eqref{2-cocycle-f1}--\eqref{2-cocycle-f4}, which are exactly the $2$-cocycle conditions given at the end of last section.
\end{proof}

Now, we can obtain a Hom-Lie antialgebra structure on the space $\fa\oplus V$ using the $2$-cocycle given above.
\begin{lemma}
Let $(\fa,\a,\b)$ be a Hom-Lie antialgebra, $(V,\r)$ be an $\fa$-module and $\omega=(\omega_0,\omega_1,\omega_2)$ is a $2$-cocycle. Then $(\fa\oplus V,\a+\a_{V_0},\b+\b_{V_1})$ is a Hom-Lie antialgebra under the following operations:
\begin{eqnarray}
\label{homo1}(\a+\a_{V_0})(x_1,u_1)&=&(\a(x_1),\a_{V_0}(u_1)),\notag\\
\label{homo2}(\b+\b_{V_1})(y_1,w_1)&=&(\b(y_1),\b_{V_1}(w_1)),\notag\\
\label{op1}
(x_1,u_1)\cdot(x_2,u_2)&=&(x_1\cdot x_2,~ \r_0(x_1)(u_2)+\r_0(x_2)(u_1)
+\omega_0(x_1,x_2)),\notag\\
\label{op2}
(x_1,u_1)\cdot(y_1,w_1)&=&(x_1\cdot y_1,~ \r_0(x_1)(w_1)+\r_1(y_1)(u_1)
+\omega_1(x_1,y_1)),\notag\\
\label{op3}
[(y_1,w_1),(y_2,w_2)]&=&([y_1,y_2], \r_1(y_1)(w_2)-\r_1(y_2)(w_1)
+\omega_2(y_1,y_2)),\notag
\end{eqnarray}
where $x_1,x_2\in\fa_0,~y_1,y_2\in\fa_1, u_1\in V_0, w_1\in V_1$.
\end{lemma}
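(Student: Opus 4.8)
The plan is to verify directly that the five structure operations listed in the statement satisfy the four defining identities \eqref{hanti01}--\eqref{hanti04} of a Hom-Lie antialgebra, together with supercommutativity of the grading. Supercommutativity is immediate: the symmetric formulas \eqref{op1} and \eqref{op2} and the antisymmetric formula \eqref{op3} reflect exactly the corresponding symmetry of $\fa$ and of $\r$. So the real content is the four Hom-identities, and the strategy for each is the same: expand both sides of the identity using the operations, then separate the result into its $\fa$-component and its $V$-component. The $\fa$-component always reduces to the corresponding identity in $(\fa,\a,\b)$, which holds by hypothesis. The $V$-component will split further into terms involving only $\r$ and terms involving the $\omega_i$; the pure-$\r$ terms cancel by the representation axioms \eqref{rep01}--\eqref{rep07} (equivalently, by Proposition \ref{prop-rep} applied to the semidirect product $\fa\ltimes V$), while the $\omega_i$-terms cancel precisely because of the $2$-cocycle conditions.

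Concretely, I would proceed identity by identity. For \eqref{hanti01} one computes $(\a+\a_{V_0})(x_1,u_1)\cdot\big((x_2,u_2)\cdot(x_3,u_3)\big)$ and the symmetric right-hand side; the $V_0$-parts, after using \eqref{cochain1} for $\omega_0$ and the associativity-type axiom \eqref{rep01} for $\r_0$, collapse to equation \eqref{cocycle1}, which is exactly the cocycle condition matching \eqref{2-cocycle-f1}. For \eqref{hanti02} the same bookkeeping produces \eqref{cocycle2}. For \eqref{hanti03}, expanding $(\a+\a_{V_0})(x_1,u_1)\cdot[(y_1,w_1),(y_2,w_2)]$ and comparing with the two-term bracket on the right, the $V$-component yields \eqref{cocycle3} after invoking axioms \eqref{rep03}--\eqref{rep07}. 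For \eqref{hanti04}, the cyclic sum of $(\b+\b_{V_1})(y_1,w_1)\cdot[(y_2,w_2),(y_3,w_3)]$ has vanishing $\fa_0$-part by the Jacobi-type identity in $\fa$, while its $V_0$-part is exactly \eqref{cocycle4}. In each case the tedious part is keeping track of which argument of $\omega_i$ receives an $\a$ or $\b$ and which bilinear term of $\r$ appears; but the paper has already recorded the needed equalities \eqref{cocycle1}--\eqref{cocycle4} in the previous lemma, so one simply cites them.

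I would also dispatch the two points that are slightly apart from the four identities: first, that $\a+\a_{V_0}$ and $\b+\b_{V_1}$ are multiplicative with respect to the new operations (this again separates into the multiplicativity of $(\a,\b)$ on $\fa$, the compatibility of $\r$ with $\a_{V_0},\b_{V_1}$, and the cochain condition \eqref{cochain1} on the $\omega_i$); and second, that the operations are well defined as maps into $\fa\oplus V$ with the stated grading, which is clear from inspection. One can then remark that this construction is nothing but the abelian extension determined by the cocycle $\omega$, consistent with the previous two lemmas, and in fact the proof is essentially the reverse reading of the computation that produced \eqref{cocycle1}--\eqref{cocycle4} from a given extension.

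The main obstacle is purely organizational rather than conceptual: there is no single hard step, but the identity \eqref{hanti03} involves the largest number of distinct $\r$-terms and $\omega$-terms (five $\r$-contributions plus several $\omega_1,\omega_2$ terms with various $\a,\b$ decorations), so the bookkeeping there is where an error is most likely to creep in. I would handle it by first writing out the $\fa$-slot (which matches \eqref{hanti03} in $\fa$ verbatim), then the $V$-slot, and within the $V$-slot grouping the pure-$\r$ part against axioms \eqref{rep03}--\eqref{rep07} and the $\omega$-part against \eqref{cocycle3}, so that the cancellation is visibly term-by-term. Since all of \eqref{cocycle1}--\eqref{cocycle4} and \eqref{rep01}--\eqref{rep07} are already available, the proof is routine and I would present it as such, giving the computation for one representative identity (say \eqref{hanti03}) in detail and indicating that the remaining three are analogous.
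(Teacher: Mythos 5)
Your proposal is correct and follows essentially the same route as the paper's own proof: expand each of the four Hom-identities on $\fa\oplus V$, match the $\fa$-components against the axioms of $(\fa,\a,\b)$, the pure-$\r$ terms against the representation conditions, and the $\omega$-terms against the cocycle identities \eqref{cocycle1}--\eqref{cocycle4}. The only cosmetic difference is that the paper explicitly labels and cancels the individual terms (and cites only \eqref{rep01}, \eqref{rep02}, \eqref{rep05}--\eqref{rep07} where needed), while you additionally note the multiplicativity check; both are the same routine verification.
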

\begin{proof} We are going to check that $\fa\oplus V$ with the above operations satisfies four axioms of Hom-Lie antialgebra.
By direct computations, we have
\begin{eqnarray*}
&&(\a+\a_{V_0})((x_1,u_1))\cdot((x_2,u_2)\cdot(x_3,u_3))\\
&=&(\a(x_1),\a_{V_0}(u_1))\cdot((x_2,u_2)\cdot(x_3,u_3))\\
&=&(\underbrace{\a(x_1)\cdot(x_2\cdot x_3)}_A,
~\underbrace{\r_0(\a(x_1))(\r_0(x_2)(u_3))}_B
+\underbrace{\r_0(\a(x_1))(\r_0(x_3)(u_2))}_C\\
&&+\underbrace{\r_0(x_2\cdot x_3)(\a_{V_0}(u_1))}_D
+\underbrace{\r_0(\a(x_1))\omega_0(x_2,x_3)+\omega_0(\a(x_1),x_2\cdot x_3)}_E)
\end{eqnarray*}
\begin{eqnarray*}
&&((x_1,u_1)\cdot(x_2,u_2))\cdot(\a+\a_{V_0})((x_3,u_3))\\
&=&((x_1,u_1)\cdot(x_2,u_2))\cdot(\a(x_3),\a_{V_0}(u_3))\\
&=&(\underbrace{(x_1\cdot x_2)\cdot \a(x_3)}_{A'},
~\underbrace{\r_0(x_1\cdot x_2)(\a_{V_0}(u_3))}_{B'}
+\underbrace{\r_0(\a(x_3))\r_0(x_1)(u_2)}_{C'}\\
&&+\underbrace{\r_0(\a(x_3))\r_0(x_2)(u_1)}_{D'}
+\underbrace{\r_0(\a(x_3))\omega_0(x_1,x_2)+\omega_0(x_1\cdot x_2,\a(x_3)}_{E'})
\end{eqnarray*}
Since $\fa$ is a Hom-Lie antialgebra and by the representation condition \eqref{rep01}, we have
$$A=A', ~~B=B', ~~C=C', ~~D=D'.$$
Due to the 2-cocycle condition \eqref{cocycle1}, we have
$$E=E'.$$
Thus we obtain
\begin{eqnarray}\notag
&&(\a+\a_{V_0})((x_1,u_1))\cdot((x_2,u_2)\cdot(x_3,u_3))\\
&=&((x_1,u_1)\cdot(x_2,u_2))\cdot(\a+\a_{V_0})((x_3,u_3)).\label{semi01}
\end{eqnarray}
Analogously, by using the representation condition \eqref{rep02} and the 2-cocycle condition \eqref{cocycle2},
one can prove the following equality:
\begin{eqnarray}\notag
&&(\a+\a_{V_0})((x_1,u_1))\cdot((x_2,u_2)\cdot(y_1,w_1))\\
&=&\half((x_1,u_1)\cdot(x_2,u_2))\cdot(\b+\b_{V_1})(y_1,w_1).\label{semi02}
\end{eqnarray}

By direct computations, we also have
\begin{eqnarray*}
&&(\a+\a_{V_0})(x_1,u_1)\cdot[(y_1,w_1)\cdot(y_2,w_2)]\\
&=&(\a(x_1),\a_{V_0}(u_1))\cdot[(y_1,w_1)\cdot(y_2,w_2)]\\
&=&(\underbrace{\a(x_1)\cdot[y_1,y_2]}_K,
~\underbrace{\r_0(\a(x_1))\r_1(y_1)(w_2)}_L
\underbrace{-\r_0(\a(x_1))\r_1(y_2)(w_1)}_{M}\\
&&+\underbrace{\r_0([y_1,y_2])(\a_{V_0}(u_1))}_{N}
+\underbrace{\r_0(\a(x_1))\omega_2(y_1,y_2)+\omega_0(\a(x_1),[y_1,y_2])}_{O})
\end{eqnarray*}
\begin{eqnarray*}
&&[(x_1,u_1)\cdot(y_1,w_1),(\b+\b_{V_1})(y_2,w_2)]\\
&=&[(x_1,u_1)\cdot(y_1,w_1)),(\b(y_2),\b_{V_1}(w_2))]\\
&=&\big(\underbrace{(x_1\cdot y_1)\cdot\b(y_2)}_{K_1},
~\underbrace{\r_1(x_1\cdot y_1)\cdot \b_{V_1}(w_2)}_{L_1}
\underbrace{-\r_1(\b(y_2))\r_0(x_1)(w_1)}_{M_1}\\
&&\underbrace{-\r_1(\b(y_2))\r_1(y_1)(u_1)}_{N_1}
\underbrace{-\r_1(\b(y_2))\omega_1(x_1,y_1)+\omega_2(x_1\cdot y_1,\b_{V_1}(w_2))}_{O_1}\big)
\end{eqnarray*}
\begin{eqnarray*}
&&[(\b+\b_{V_1})(y_1,w_1),(x_1,u_1)\cdot(y_2,w_2)]\\
&=&[(\b(y_1),\b_{V_1}(w_1)),(x_1,u_1)\cdot(y_2,w_2)]\\
&=&\big(\underbrace{\b(y_1)\cdot(x_1\cdot y_2)}_{K_2},
\underbrace{\r_1(\b(y_1))\r_0(x_1)(w_2)}_{L_2}
\underbrace{-\r_1(x_1\cdot y_2)\b_{V_1}(w_1)}_{M_2}\\
&&+\underbrace{\r_1(\b(y_1))\r_1(y_2)(u_1)}_{N_2}
+\underbrace{\r_1(\b(y_1))\omega_1(x_1,y_2)+\omega_2(x_1\cdot y_2,\b(y_1))}_{O_2}\big)
\end{eqnarray*}
Since $\fa$ is a Hom-Lie antialgebra and by the representation conditions \eqref{rep05} and \eqref{rep06}, we have
$$K=K_1+K_2,~L=L_1+L_2,~M=M_1+M_2,~N=N_1+N_2.$$
Due to the 2-cocycle condition \eqref{cocycle3}, we have
$$O=O_1+O_2.$$
Thus we get
\begin{eqnarray}\notag
&&(\a+\a_{V_0})(x_1,u_1)\cdot[(y_1,w_1), (y_2,w_2)]\\
&=&[(x_1,u_1)\cdot(y_1,w_1),(\b+\b_{V_1})(y_2,w_2)]+[(\b+\b_{V_1})(y_1,w_1),(x_1,u_1)\cdot(y_2,w_2)].\qquad\label{semi03}
\end{eqnarray}
Similarly, by the representation condition \eqref{rep07} and  the 2-cocycle condition \eqref{cocycle4}, we get
\begin{eqnarray}\notag
&&(\b+\b_{V_1})(y_1,w_1)\cdot[(y_2,w_2),(y_3,w_3)]\\
\notag &&+(\b+\b_{V_1})(y_2,w_2)\cdot[(y_3,w_3),(y_1,w_1)]\\
&&+(\b+\b_{V_1})(y_3,w_3)\cdot[(y_1,w_1),(y_2,w_2)]=0.\label{semi04}
\end{eqnarray}
Therefore, from equalities \eqref{semi01}--\eqref{semi04}, we obtain that $\fa\oplus V$ is a Hom-Lie antialgebra.
The proof is completed.
\end{proof}

\begin{lemma}
Two abelian extensions of Hom-Lie antialgebras
\begin{eqnarray*}
\xymatrix@C=0.5cm{
  0 \ar[r] & V \ar[r]^{} & \fa\oplus_{\omega} V \ar[r]^{} & \fa \ar[r] & 0 }
\end{eqnarray*}
and
\begin{eqnarray*}
\xymatrix@C=0.5cm{
  0 \ar[r] & V \ar[r]^{} & \fa\oplus_{\omega'} V \ar[r]^{} & \fa \ar[r] & 0 }
\end{eqnarray*}
are equivalent if and only if $\omega$ and $\omega'$ are in the same cohomology class.
\end{lemma}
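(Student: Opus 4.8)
The plan is to prove both directions of the equivalence by constructing explicit Hom-Lie antialgebra homomorphisms between the two split extensions $\fa\oplus_\omega V$ and $\fa\oplus_{\omega'}V$ and tracking how the cocycles transform.

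First I would establish the ``only if'' direction. Suppose $\phi=(\phi_0,\phi_1):\fa\oplus_\omega V\to\fa\oplus_{\omega'}V$ is an equivalence of extensions. Because $\phi$ commutes with the inclusions of $V$ and the projections onto $\fa$, and it fixes $V$ and $\fa$ componentwise, it must have the form $\phi_0(x,u)=(x,u+\mu_0(x))$ and $\phi_1(y,w)=(y,w+\mu_1(y))$ for some linear maps $\mu_0:\fa_0\to V_0$ and $\mu_1:\fa_1\to V_1$; that is, $\phi=\mathrm{id}+\mu$ where $\mu=(\mu_0,\mu_1)$ is a $1$-cochain. The compatibility conditions \eqref{Thomo4}--\eqref{Thomo5} with the structure maps $\a+\a_{V_0}$, $\b+\b_{V_1}$ force $\mu_0\circ\a=\a_{V_0}\circ\mu_0$ and $\mu_1\circ\b=\b_{V_1}\circ\mu_1$, so $\mu$ is genuinely a $1$-cochain in $C^1(\fa,V)$. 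Then I would impose that $\phi$ is multiplicative with respect to the three products \eqref{op1}--\eqref{op3}: for instance, expanding $\phi_0((x_1,u_1)\cdot(x_2,u_2))=\phi_0(x_1,u_1)\cdot\phi_0(x_2,u_2)$ in $\fa\oplus_{\omega'}V$ and comparing $V_0$-components yields, after cancellation, an identity of the shape $\omega_0(x_1,x_2)-\omega_0'(x_1,x_2)=\r_0(x_1)\mu_0(x_2)+\r_0(x_2)\mu_0(x_1)-\mu_0(x_1\cdot x_2)$, which is precisely (up to the factor $\half$ noted in the previous lemma) the $(d^1\mu)$-term in component \eqref{2-cocycle-f1}; similarly the mixed product gives the component matching \eqref{2-cocycle-f2} and the bracket gives the component matching \eqref{2-cocycle-f4} applied to $\omega_2$ together with the $\omega_1$-piece in \eqref{2-cocycle-f3}. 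Collecting these shows $\omega-\omega'=d^1\mu$, i.e. $[\omega]=[\omega']$.

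For the ``if'' direction I would simply reverse this: given $\mu=(\mu_0,\mu_1)\in C^1(\fa,V)$ with $\omega'=\omega+d^1\mu$, define $\phi_0(x,u)=(x,u+\mu_0(x))$ and $\phi_1(y,w)=(y,w+\mu_1(y))$. One checks $\phi$ is a linear isomorphism fixing $V$ and covering $\mathrm{id}_\fa$, so $\phi\circ i=j$ and $q\circ\phi=p$ hold automatically. That $\phi$ intertwines $\a+\a_{V_0}$ and $\b+\b_{V_1}$ follows from the cochain condition \eqref{cochain1} on $\mu$. Finally, multiplicativity of $\phi$ for each of the three operations is exactly the three component identities $\omega'-\omega=d^1\mu$, read in reverse; so $\phi$ is a Hom-Lie antialgebra isomorphism and the two extensions are equivalent.

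The main obstacle is purely bookkeeping: the explicit form of $d^1$ on a $1$-cochain $\mu=(\mu_0,\mu_1)$ must be written out in its $(1,0)$, $(0,1)$ and $(-1,2)$ pieces (in degrees $C^{1,0}$, $C^{0,1}$, and on each of the four bidegree components of $C^2$), and then matched termwise against the differences of the defining formulas \eqref{op1}--\eqref{op3} for $\fa\oplus_\omega V$ versus $\fa\oplus_{\omega'}V$, being careful about the normalizing factors $\half$, $\tfrac1n$, etc., that appear in the coboundary operator and about the substitution convention ``$\half\omega_0\leftrightarrow f(x_1,x_2)$'' used in the previous lemma. No conceptual difficulty arises, but the sign and coefficient matching across the four cases of \eqref{2-cocycle-f1}--\eqref{2-cocycle-f4} is where care is needed; I would present one case (say the $V_0$-component of the even product) in detail and assert the remaining three are analogous.
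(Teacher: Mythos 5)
Your proposal follows essentially the same route as the paper: write the equivalence as $\phi=\mathrm{id}+\mu$ for a $1$-cochain $\mu=(f_0,f_1)$, expand the three multiplicativity conditions to obtain $\omega-\omega'=d^1\mu$ componentwise, and reverse the computation for the converse. Your additional observation that the intertwining conditions with $\a+\a_{V_0}$ and $\b+\b_{V_1}$ force $\mu$ to satisfy the cochain condition \eqref{cochain1} is a point the paper leaves implicit, but it does not change the argument.
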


\begin{proof} First, assume the above two abelian extensions are equivalent and $\phi=(\phi_0,\phi_1):\fa\oplus_{\omega} V\rightarrow \fa\oplus_{\omega'} V$ be the corresponding homomorphism.
As $\phi$ has to be the identity on $\fa$, then there must exists a map $f:\fa\rightarrow V$ such that
\begin{eqnarray}\label{equ1}
\phi_0(x_i,u_i)&=&(x_i,f_0(x_i)+u_i),\\
\phi_1(y_i,w_i)&=&(y_i,f_1(y_i)+w_i),\label{equ2}
\end{eqnarray}
where $f_0:\fa_0\to V_0$ and $f_1:\fa_1\to V_1$.

Since  $\phi$ is a homomorphism between Hom-Lie antialgebras $\fa\oplus_{\omega} V$ and $\fa\oplus_{\omega'} V$,  we have
\begin{eqnarray}\label{homo1}
\phi_0\big((x_1,u_1)\cdot(x_2,u_2)\big)&=&\phi_0(x_1,u_1)\cdot\phi_0(x_2,u_2),\\
\label{homo2}
\phi_1\big((x_1,u_1)\cdot(y_1,w_1)\big)&=&\phi_0(x_1,u_1)\cdot\phi_1(y_1,w_1),\\
\label{homo3}
\phi_0\big([(y_1,w_1),(y_2,w_2)]\big)&=&[\phi_1(y_1,w_1),\phi_1(y_2,w_2)].
\end{eqnarray}

The left hand side of \eqref{homo1} is equal to
\begin{eqnarray*}
 &&\phi_0(x_1\cdot x_2,~\r_0(x_1)(u_2)+\r_0(x_2)(u_1)+\omega_0(x_1,x_2))\\
&=&(x_1\cdot x_2,~f_0(x_1\cdot x_2)+\r_0(x_1)(u_2)+\r_0(x_2)(u_1)+\omega_0(x_1,x_2))
\end{eqnarray*}
and the right hand side of \eqref{homo1} is equal to
\begin{eqnarray*}
&&(x_1,f_0(x_1)+u_1)\cdot(x_2,f_0(x_2)+u_2)\\
&=&(x_1\cdot x_2,~\r_0(x_1)f_0(x_2)+\r_0(x_1)(u_2)\\
&&+\r_0(x_2)f_0(x_1)+\r_0(x_2)(u_1)+\omega_0'(x_1,x_2)).
\end{eqnarray*}
Thus we obtain
\begin{eqnarray}\label{bound1}
(\omega_0-\omega_0')(x_1,x_2)=\r_0(x_1)f_0(x_2)+\r_0(x_2)f_0(x_1)-f_0(x_1\cdot x_2).
\end{eqnarray}

By similar computations, we also obtain
\begin{eqnarray}\label{bound2}
(\omega_1-\omega_1')(x_1,y_1)&=&\r_0(x_1)f_1(y_1)+\r_1(y_1)f_0(x_1)-f_1(x_1\cdot y_1),\\
\label{bound3}(\omega_2-\omega_2')(y_1,y_2)&=&\r_1(y_1)f_1(y_2)-\r_1(y_2)f_1(y_1)-f_0([y_1,y_2]).
\end{eqnarray}
From equations \eqref{bound1}--\eqref{bound3}, we obtain that $\omega$ and $\omega'$ are in the same cohomology class.

Conversely, if $\omega$ and $\omega'$ are in the same cohomology class, there exists a coboundary map $f:\fa\to V$ such that $\omega-\omega'=df$. Then we can define the maps $\phi$ by \eqref{equ1} and \eqref{equ2}.
Similar as the above calculations, one can show that $\phi$ is an equivalence of the two abelian extensions. We omit the details.
This finished the proof.
\end{proof}

From the above lemmas, we obtain the main theorem of this section.
\begin{theo}\label{thm001}
Let $(\fa,\a,\b)$ be a Hom-Lie antialgebra and $(\r_0,\r_1)$ be a representation of  $(\fa,\a,\b)$ over $(V,\a_{V_0},\b_{V_1})$. Then there is a one-to-one correspondence between the set of equivalent classes of abelian extensions of the Hom-Lie antialgebra $(\fa,\a,\b)$ by $(V,\a_{V_0},\b_{V_1})$ and the elements in the second cohomology group $H^2(\fa,V)$.
\end{theo}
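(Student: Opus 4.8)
The plan is to assemble the theorem from the chain of lemmas already established in this section, exhibiting explicitly the two maps that realize the bijection and checking they are mutually inverse. First I would fix a representation $(\r_0,\r_1)$ of $(\fa,\a,\b)$ on $(V,\a_{V_0},\b_{V_1})$ and define a map $\Theta$ from $\mathrm{Ext}(\fa,V)$ (the set of equivalence classes of abelian extensions inducing this fixed representation) to $H^2(\fa,V)$ as follows: given an abelian extension $0\to V\to\widetilde{\fa}\to\fa\to 0$, choose a section $\s=(\s_0,\s_1)$, form the $2$-cochain $\om=(\om_0,\om_1,\om_2)$ by formulas \eqref{cocy1}--\eqref{cocy3}, and send the class of the extension to $[\om]\in H^2(\fa,V)$. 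The second preceding lemma shows $\om$ is a genuine $2$-cocycle, so $[\om]$ makes sense.

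Next I would check that $\Theta$ is well defined, i.e. independent of the choice of section and constant on equivalence classes. If $\s$ and $\s'$ are two sections, then $\s_0-\s'_0$ and $\s_1-\s'_1$ take values in $V$ (since $p\circ\s=p\circ\s'=\mathrm{id}$), giving a linear map $f=(f_0,f_1):\fa\to V$; a direct computation—identical in form to the one in the last lemma producing \eqref{bound1}--\eqref{bound3}—shows the two resulting cocycles differ by $df$, hence define the same cohomology class. Equivalence of extensions is handled the same way, since an equivalence $\phi$ transports one section to another. In the reverse direction I would define $\Xi:H^2(\fa,V)\to\mathrm{Ext}(\fa,V)$ by sending $[\om]$ to the class of the extension $0\to V\to\fa\oplus_\om V\to\fa\to 0$, where $\fa\oplus_\om V$ carries the Hom-Lie antialgebra structure of the third preceding lemma (which guarantees this is indeed a Hom-Lie antialgebra and hence an abelian extension); the fourth preceding lemma shows this assignment is independent of the representative $\om$ of the cohomology class, so $\Xi$ is well defined.

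Finally I would verify $\Theta$ and $\Xi$ are mutually inverse. For $\Theta\circ\Xi=\mathrm{id}$: the extension $\fa\oplus_\om V$ has the obvious section $\s_0(x)=(x,0)$, $\s_1(y)=(y,0)$, and plugging into \eqref{cocy1}--\eqref{cocy3} recovers exactly $\om$, so $\Theta([\fa\oplus_\om V])=[\om]$. For $\Xi\circ\Theta=\mathrm{id}$: given an abelian extension $\widetilde{\fa}$ with section $\s$ and associated cocycle $\om$, the map $\widetilde{\fa}\to\fa\oplus_\om V$ sending $\widetilde{a}\mapsto(p(\widetilde{a}),\,\widetilde{a}-\s(p(\widetilde{a})))$ is well defined (the second component lands in $V$) and is checked to be a Hom-Lie antialgebra isomorphism compatible with the identity on $V$ and on $\fa$, using precisely the defining relations \eqref{cocy1}--\eqref{cocy3} of $\om$ together with the definitions \eqref{repre1}--\eqref{repre2} of the representation; hence the two extensions are equivalent. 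Combining these, $\Theta$ is a bijection with inverse $\Xi$, which is the assertion of the theorem.

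The bulk of the work is bookkeeping rather than conceptual, since every ingredient (cocycle property, the twisted Hom-Lie antialgebra structure, the cohomological classification of equivalences) is already in hand from the lemmas. The one place demanding genuine care—and what I expect to be the main obstacle—is checking that the map $\widetilde{\fa}\to\fa\oplus_\om V$ in $\Xi\circ\Theta=\mathrm{id}$ respects all three products \eqref{op1}--\eqref{op3} and intertwines $(\widetilde{\a},\widetilde{\b})$ with $(\a+\a_{V_0},\b+\b_{V_1})$; this requires unwinding how $\s$ interacts with the products of $\widetilde{\fa}$ and invoking multiplicativity of $\widetilde{\a},\widetilde{\b}$ on the section terms, and it is here that the $\tfrac12$ in \eqref{op2} and the antisymmetrization in \eqref{op3} must be matched exactly against \eqref{cocy2}--\eqref{cocy3}. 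Since these verifications are routine but lengthy and parallel to computations already displayed, I would carry out one representative case in detail and indicate that the others follow the same pattern.
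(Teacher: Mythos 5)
Your proposal is correct and follows essentially the same route as the paper, which simply assembles the theorem from the preceding lemmas (the cocycle construction from a section, the twisted algebra $\fa\oplus_\om V$, and the classification of equivalences by coboundaries). In fact you spell out two points the paper leaves implicit---independence of the choice of section and the isomorphism $\widetilde{\fa}\to\fa\oplus_\om V$ showing every abelian extension is equivalent to a split-vector-space one---so your write-up is, if anything, more complete than the paper's one-line justification.
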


\section{Deformations}\label{def}
In this section, we study infinitesimal deformations of Hom-Lie antialgebras. The notion of Nijenhuis operators for Hom-Lie antialgebras is introduced. This kind of operators gives trivial deformation.

Let $(\fa,\alpha,\beta)$ be a Hom-Lie antialgebra and $\omega_0:\fa_0\times\fa_0\rightarrow\fa_0$, $\omega_1:\fa_0\times\fa_1\rightarrow\fa_1,\omega_2:\fa_1\times\fa_1\rightarrow\fa_0$ be bilinear maps. Consider a $t$-parametrized family of bilinear maps:
\begin{eqnarray*}
{x_1\cdot_t x_2}&=&x_1\cdot x_2+t\omega_0(x_1,x_2),\\
{x_1\cdot_t y_1}&=&x_1\cdot y_1+t\omega_1(x_1,y_1),\\
{[y_1,y_2]_t}&=&[y_1,y_2]+t\omega_2(y_1,y_2).
\end{eqnarray*}

If these maps endow $(\fa,\alpha,\beta)$ with a Hom-Lie antialgebra structure which is denoted by $(\fa_t,\alpha,\beta)$, then we say that $\omega=(\omega_0,\omega_1,\omega_2)$ generates a $t$-parameter infinitesimal deformation of Hom-Lie antialgebra $(\fa,\alpha,\beta)$.
\begin{theo}\label{TwoCon}
With the above notations, $\omega=(\omega_0,\omega_1,\omega_2)$ generates a $t$-parameter infinitesimal deformation of a Hom-Lie antialgebra $\fa$ if and only if the following two conditions hold:

(i) $(\omega_0,\omega_1,\omega_2)$ defines a Hom-Lie antialgebra structure on $\fa$;

(ii) $(\omega_0,\omega_1,\omega_2)$ is a 2-cocycle of $\fa$ with the coefficients in the adjoint representation.
\end{theo}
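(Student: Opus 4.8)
The plan is to substitute the deformed operations into the four defining identities \eqref{hanti01}--\eqref{hanti04} of a Hom-Lie antialgebra, together with the multiplicativity requirements for $\a$ and $\b$, expand each resulting equation as a polynomial in $t$, and set the coefficients of $t^0$, $t^1$ and $t^2$ equal to zero. Since each of $x_1\cdot_t x_2$, $x_1\cdot_t y_1$ and $[y_1,y_2]_t$ is affine in $t$, every one of these identities produces exactly a quadratic polynomial in $t$; hence $(\fa_t,\a,\b)$ is a Hom-Lie antialgebra for all $t$ if and only if all three coefficients vanish identically.

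First I would note that the $t^0$ coefficient of each identity is just the corresponding undeformed identity, so it holds automatically because $(\fa,\a,\b)$ is a Hom-Lie antialgebra; likewise the $t^0$ part of the multiplicativity conditions is automatic. Next I would collect the $t^1$ coefficients. For instance, the linear term of \eqref{hanti01} reads
\begin{equation*}
\a(x_1)\cdot\omega_0(x_2,x_3)+\omega_0(\a(x_1),x_2\cdot x_3)=\omega_0(x_1,x_2)\cdot\a(x_3)+\omega_0(x_1\cdot x_2,\a(x_3)),
\end{equation*}
and analogous equations arise from \eqref{hanti02}--\eqref{hanti04}. Writing $\r_0(x)(z)=x\cdot z$ and letting $\r_1(y)$ denote the corresponding product/bracket of the adjoint representation of $\fa$ on itself, and using supercommutativity to rearrange factors, these four linear identities match, term by term, the $2$-cocycle equations \eqref{2-cocycle-f1}--\eqref{2-cocycle-f4} for $V=\fa$; here one uses exactly the normalization of Section \ref{ext}, under which the cochain $f$ corresponds to $\half\omega_0$ on $\fa_0\otimes\fa_0$ and to $\omega_1,\omega_2$ on the mixed and odd parts. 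In the same way, the $t^1$ part of the multiplicativity of $\a,\b$ is precisely the $2$-cochain condition \eqref{cochain1} with $V=\fa$. Hence the vanishing of all $t^1$ coefficients is equivalent to condition (ii).

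Finally I would collect the $t^2$ coefficients. The one coming from \eqref{hanti01} is
\begin{equation*}
\omega_0(\a(x_1),\omega_0(x_2,x_3))=\omega_0(\omega_0(x_1,x_2),\a(x_3)),
\end{equation*}
and those from \eqref{hanti02}--\eqref{hanti04} are obtained from those identities by replacing every product and bracket by $\omega_0,\omega_1,\omega_2$ while keeping the $\a$'s and $\b$'s in place; together with the multiplicativity of $\a,\b$ relative to $\omega$ (already guaranteed at order $t^1$), these are exactly the axioms expressing that $(\fa;\omega_0,\omega_1,\omega_2,\a,\b)$ is a Hom-Lie antialgebra, i.e.\ condition (i). Putting the three observations together yields the claimed equivalence. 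The whole argument is routine once organized by powers of $t$; the only genuinely delicate point is the bookkeeping --- checking that the four linear identities reproduce the complete list of $2$-cocycle equations and no extra relation, and keeping the factor $\half$ between $\omega_0$ and the cochain $f$ consistent throughout, just as in Section \ref{ext}.
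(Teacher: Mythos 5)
Your proposal is correct and follows essentially the same route as the paper: expand each of the identities \eqref{hanti01}--\eqref{hanti04} and the multiplicativity conditions in powers of $t$, observe that the $t^0$ terms are automatic, identify the $t^1$ coefficients (equations \eqref{decocy1}--\eqref{decocy4} and \eqref{mm1}--\eqref{mm3} in the paper) with the $2$-cocycle and $2$-cochain conditions, and the $t^2$ coefficients (\eqref{new1}--\eqref{new4}) with the axioms making $(\omega_0,\omega_1,\omega_2)$ itself a (multiplicative) Hom-Lie antialgebra structure. Your remark about the factor $\half$ linking $\omega_0$ to the cochain $f$ matches the normalization the paper uses at the end of Section \ref{Rep} and in Section \ref{ext}.
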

\begin{proof}
First, if we assume that $(\fa,\alpha,\beta)$ and $(\fa_t,\alpha,\beta)$ are multiplicative Hom-Lie antialgebras, then we have
\begin{eqnarray*}
\alpha(x_1\cdot_t x_2)&=&\alpha(x_1)\cdot_t\alpha(x_2),
\end{eqnarray*}
the left hand side is equal to
\begin{eqnarray*}
&&\alpha(x_1\cdot_t x_2)\\
&=&\alpha(x_1\cdot x_2+t\omega_0(x_1, x_2))\\
&=&\alpha(x_1)\cdot \alpha(x_2)+t\alpha\omega_0(x_1, x_2));
\end{eqnarray*}
the right hand side is equal to
\begin{eqnarray*}
&&\alpha(x_1)\cdot_t\alpha(x_2)\\
&=&\alpha(x_1)\cdot \alpha(x_2)+t\omega_0(\alpha(x_1), \alpha(x_2)).
\end{eqnarray*}
Thus we obtain
\begin{eqnarray}\label{mm1}
\alpha\omega_0(x_1, x_2))=\omega_0(\alpha(x_1), \alpha(x_2)).
\end{eqnarray}
Similarly, one obtain
\begin{eqnarray}
\label{mm2}\beta\omega_1(x_1, y_1))&=&\omega_0(\alpha(x_1), \beta(y_1))\\
\label{mm3}\alpha\omega_2(y_1, y_2))&=&\omega_0(\beta(y_1), \beta(y_2)).
\end{eqnarray}

Second, assume $(\omega_0,\omega_1,\omega_2)$ generates a $t$-parameter infinitesimal deformation of the Hom-Lie antialgebra $\fa$, then the maps $[\cdot,\cdot]_t$ defined above must satisfy \eqref{hanti01}--\eqref{hanti04}.

For the equality
\begin{eqnarray*}
\a(x_1)\cdot_t(x_2\cdot_t x_3)=(x_1\cdot_t x_2)\cdot_t\a(x_3),
\end{eqnarray*}
the left hand side is equal to
\begin{eqnarray*}
&&\a(x_1)\cdot_t(x_2\cdot x_3+t\omega_0(x_2,x_3))\\
&=&\a(x_1)\cdot(x_2\cdot x_3)+\a(x_1)\cdot t\omega_0(x_2,x_3)\\
&&+t\omega_0(\a(x_1),x_2\cdot x_3)+t\omega_0(\a(x_1),t\omega_0(x_2,x_3));
\end{eqnarray*}
the right hand side is equal to
\begin{eqnarray*}
&&(x_1\cdot x_2+t\omega_0(x_1,x_2))\cdot_t\a(x_3)\\
&=&(x_1\cdot x_3)\cdot\a(x_3)+t\omega_0(x_1,x_2)\cdot\a(x_3)\\
&&+t\omega_0(x_1\cdot x_2,\a(x_3))+t\omega_0(t\omega_0(x_1,x_2),\a(x_3)).
\end{eqnarray*}
Thus we have
\begin{eqnarray}\label{decocy1}
\nonumber&&\a(x_1)\cdot\omega_0(x_2,x_3)+\omega_0(\a(x_1),x_2\cdot x_3)\\
&=&\omega_0(x_1,x_2)\cdot\a(x_3)+\omega_0(x_1\cdot x_2,\a(x_3)),
\end{eqnarray}
and
\begin{eqnarray}\label{new1}
\omega_0(\a(x_1),\omega_0(x_2,x_3))=\omega_0(\omega_0(x_1,x_2),\a(x_3)).
\end{eqnarray}
For the equality
\begin{eqnarray*}
\a(x_1)\cdot_t(x_2\cdot_t y_1)=\half(x_1\cdot_t x_2)\cdot_t\b(y_1),
\end{eqnarray*}
the left hand side is equal to
\begin{eqnarray*}
&&\a(x_1)\cdot_t(x_2\cdot y_1+t\omega_1(x_1,y))\\
&=&\a(x_1)\cdot(x_2\cdot y_1)+\a(x_1)\cdot t\omega_1(x_1,y_1)\\
&&+t\omega_1(\a(x_1),x_2\cdot y_1)+t\omega_1(\a(x_1),t\omega_1(x_2,y_1));
\end{eqnarray*}
the right hand side is equal to
\begin{eqnarray*}
&&\half(x_1\cdot x_2+t\omega_0(x_1,x_2))\cdot_t\b(y_1)\\
&=&\half\big((x_1,x_2)\cdot\b(y_1)+t\omega_0(x_1,x_2)\cdot\b(y_1)\\
&&+t\omega_1(x_1\cdot x_2,\b(y_1))+t\omega_1(t\omega_0(x_1,x_2),\b(y_1))\big).
\end{eqnarray*}
Thus we have
\begin{eqnarray}\label{decocy2}
\nonumber&&\a(x_1)\cdot\omega_1(x_2,y_1)+\omega_1(\a(x_1),x_2\cdot y_1)\\
&=&\half\omega_0(x_1,x_2)\cdot\b(y_1)+\half\omega_1(x_1\cdot x_2,\b(y_1))
\end{eqnarray}
and
\begin{eqnarray}\label{new2}
\omega_1\big(\a(x_1),\omega_1(x_2,y_1)\big)=\half\omega_1(\omega_0(x_1,x_2),\b(y_1)).
\end{eqnarray}
For the equality
\begin{eqnarray*}
\a(x_1)\cdot_t[y_1, y_2]_t=[(x_1\cdot_t y_1),\b(y_2)]_t+[\b(y_1),(x_1\cdot_t y_2)]_t
\end{eqnarray*}
the left hand side is equal to
\begin{eqnarray*}
&&\a(x_1)\cdot_t\big([y_1,y_2]+t\omega_2(y_1,y_2)\big)\\
&=&\a(x_1)\cdot[y_1,y_2]+t\omega_0(\a(x_1),[y_1,y_2])\\
&&+\a(x_1)\cdot t\omega_2(y_1,y_2)+t\omega_0(\a(x_1),t\omega_2(y_1,y_2));
\end{eqnarray*}
the right hand side is equal to
\begin{eqnarray*}
&&[(x_1\cdot y_1+t\omega_1(x_1,y_1)),\b(y_2)]_t+[\b(y_1),(x\cdot y_2+t\omega_1(x_1,y_2))]_t\\
&=&[(x_1\cdot y_1),\b(y_2)]+t\omega_1(x_1,y_1)\cdot\b(y_2)\\
&&+t\omega_2(x_1\cdot y_1,\b(y_2))+t\omega_2(t\omega_1(x_1,y_1),\b(y_2))\\
&&+[\b(y_1),(x_1\cdot y_2)]+\b(y_1)\cdot t\omega_1(x_1,y_2)\\
&&+t\omega_2(\b(y_1),x\cdot y_2)+t\omega_2(\b(y_1),t\omega_1(x_1,y_2)).
\end{eqnarray*}
Thus we have
\begin{eqnarray}\label{decocy3}
\nonumber&&\omega_0(\a(x_1),[y_1,y_2])+\a(x_1)\cdot\omega_2(y_1,y_2)\\
\nonumber&=&\omega_1(x_1,y_1)\cdot\b(y_2)+\omega_2(x_1\cdot y_1,\b(y_2))\\
&&+\b(y_1)\cdot\omega_1(x_1,y_2)+\omega_2(\b(y_1),x_1\cdot y_2)
\end{eqnarray}
and
\begin{eqnarray}\label{new3}
\omega_0(\a(x_1),\omega_2(y_1,y_2))=\omega_2(\omega_1(x_1,y_1),\b(y_2))+\omega_2(\b(y_1),\omega_1(x_1,y_2)).
\end{eqnarray}
For the equality
\begin{eqnarray*}
\b(y_1)\cdot_t[y_2,y_3]_t+\b(y_2)\cdot_t[y_3,y_1]_t+\b(y_3)\cdot_t[y_1,y_2]_t=0
\end{eqnarray*}
we have
\begin{eqnarray}\label{decocy4}
\nonumber\b(y_1)\cdot\omega_2(y_2,y_3)+\omega_1(\b(y_1),[y_2,y_3])+\b(y_2)\cdot t\omega_2(y_3,y_1)\\
+\omega_1(\b(y_2),[y_3,y_1])+\b(y_3)\cdot\omega_2(y_1,y_2)+\omega_1(\b(y_3),[y_1,y_2])=0
\end{eqnarray}
and
\begin{eqnarray}\label{new4}
\omega_1(\b(y_1),\omega_2(y_2,y_3))+\omega_1(\b(y_2),\omega_2(y_3,y_1))+\omega_1(\b(y_3),\omega_2(y_1,y_2))=0.
\end{eqnarray}
Therefore, by \eqref{mm1}--\eqref{mm3}, \eqref{new1},\eqref{new2},\eqref{new3} and \eqref{new4}, $(\omega_0,\omega_1,\omega_2)$ defines a multiplicative Hom-Lie antialgebra on $\fa$. Futhermore, by \eqref{mm1}--\eqref{mm3}, \eqref{decocy1},\eqref{decocy2},\eqref{decocy3} and \eqref{decocy4}, we obtain that $(\omega_0,\omega_1,\omega_2)$ is a 2-cocycle of $\fa$ with coefficients in the adjoint representation.

Conversely, if $(\omega_0,\omega_1,\omega_2)$ defines a multiplicative Hom-Lie antialgebra on $\fa$ and it is a 2-cocycle of $\fa$ with coefficients in the adjoint representation, then one can check by the same reasoning as above that $\omega=(\omega_0,\omega_1,\omega_2)$ generates a $t$-parameter infinitesimal deformation of Hom-Lie antialgebra $(\fa,\alpha,\beta)$. This finished the proof.
\end{proof}

A deformation is said to be trivial if there exists a linear map $\phi=(\phi_0,\phi_1):\fa\rightarrow\fa$ such that for $\phi_t=(\phi_{0t},\phi_{1t}):\fa_t\rightarrow\fa$, $\phi_{0t}=id_{\fa_0}+t\phi_0,~\phi_{1t}=id_{\fa_1}+t\phi_1$ the following holds:
\begin{eqnarray}
\phi_{0t}\circ\a=\a\circ\phi_{0t},~~\phi_{1t}\circ\b=\b\circ\phi_{1t},\label{dehomo2}
\end{eqnarray}
and
\begin{eqnarray}
\nonumber\phi_{0t}(x_1\cdot_t x_2)&=&\phi_{0t}(x_1)\cdot\phi_{0t}(x_2),\\
\phi_{1t}(x_1\cdot_t y_1)&=&\phi_{0t}(x_1)\cdot\phi_{1t}(y_1),\\\label{dehomo1}
\nonumber\phi_{0t}[y_1,y_2]_t&=&[\phi_{1t}(y_1),\phi_{1t}(y_2)].
\end{eqnarray}
That is to say  $\phi_t$ is  a Hom-Lie antialgebra homomorphism from $\fa_t$ to $\fa$.

From equation \eqref{dehomo2}, we get
\begin{eqnarray}
\phi_{0}\circ\a=\a\circ\phi_{0},~~\phi_{1}\circ\b=\b\circ\phi_{1}.\label{Nij00}
\end{eqnarray}

From equation \eqref{dehomo1}, we have
\begin{eqnarray*}
\phi_{0t}(x_1\cdot_t x_2)&=&(id_{\fa_0}+t\phi_0)(x_1\cdot x_2+t\omega_0(x_1,x_2))\\
&=&x_1\cdot x_2+t\big(\omega_0(x_1,x_2)+\phi_0(x_1\cdot x_2)\big)+t^2\phi_0\omega_0(x_1,x_2),
\end{eqnarray*}
and
\begin{eqnarray*}
\phi_{0t}(x_1)\cdot\phi_{0t}(x_2)&=&(x_1+t\phi_0(x_1))\cdot(x_2+t\phi_0(x_2))\\
&=&x_1\cdot x_2+t(x_1\cdot\phi_0(x_2)+\phi_0(x_1)\cdot x_2)+t^2\phi_0(x_1)\cdot\phi_0(x_2).
\end{eqnarray*}
Thus, we get
\begin{eqnarray}
\omega_0(x_1,x_2)&=&x_1\cdot\phi_0(x_2)+\phi_0(x_1)\cdot x_2-\phi_0(x_1\cdot x_2),\\
\phi_0\omega_0(x_1,x_2)&=&\phi_0(x_1)\cdot\phi_0(x_2),
\end{eqnarray}
and
\begin{eqnarray}
\phi_0(x_1\cdot\phi_0(x_2))+\phi_0(\phi_0(x_1)\cdot x_2)-\phi_0^2(x_1\cdot x_2)=\phi_0(x_1)\cdot\phi_0(x_2).\label{Nij01}
\end{eqnarray}
Similarly,
\begin{eqnarray*}
\phi_{1t}(x\cdot_t y)&=&(id_{\fa_1}+t\phi_1)(x\cdot y+t\omega_1(x,y))\\
&=&x\cdot y+t\omega_1(x,y)+t\phi_1(x\cdot y)+t^2\phi_1\omega_1(x,y),
\end{eqnarray*}
\begin{eqnarray*}
\phi_{0t}(x)\cdot\phi_{1t}(y)&=&(x+t\phi_0(x))\cdot(y+t\phi_1(y))\\
&=&x\cdot y+x\cdot t\phi_1(y)+t\phi_0(x)\cdot y+t^2\phi_0(x)\cdot\phi_1(y),
\end{eqnarray*}
we get
\begin{eqnarray}
\omega_1(x,y)&=&x\cdot\phi_1(y)+\phi_0(x)\cdot y-\phi_1(x\cdot y),\\
\phi_1\omega_1(x,y)&=&\phi_0(x)\cdot\phi_1(y),
\end{eqnarray}
and
\begin{eqnarray}
\phi_1(x\cdot\phi_1(y))+\phi_1(\phi_0(x)\cdot y)-\phi_1^2(x\cdot y)=\phi_0(x)\cdot\phi_1(y).\label{Nij02}
\end{eqnarray}
Next, due to
\begin{eqnarray*}
\phi_{0t}[y_1,y_2]_t&=&(id_{\fa_1}+t\phi_0)([y_1,y_2]+t\omega_2(y_1,y_2))\\
&=&[y_1,y_2]+t\omega_2(y_1,y_2)+t\phi_0([y_1,y_2])+t^2\phi_0\omega_2(y_1,y_2)
\end{eqnarray*}
and
\begin{eqnarray*}
[\phi_{1t}(y_1),\phi_{1t}(y_2)]&=&[y+t\phi_1(y_1),y_2+t\phi_1(y_2)]\\
&=&[y_1,y_2]+[y_1,t\phi_1(y_2)]+[t\phi_1(y_1),y_2]+[t\phi_1(y_1),t\phi_1(y_2)],
\end{eqnarray*}
we get
\begin{eqnarray}
\omega_2(y_1,y_2)&=&[y_1,\phi_1(y_2)]+[\phi_1(y_1),y_2]-\phi_0([y_1,y_2]),\\
\phi_0\omega_2(y_1,y_2)&=&[\phi_1(y_1),\phi_1(y_2)],
\end{eqnarray}
and
\begin{eqnarray}
\phi_0[y_1,\phi_1(y_2)]+\phi_0[\phi_1(y_1),y_2]-\phi_0^2([y_1,y_2])=[\phi_1(y_1),\phi_1(y_2)].\label{Nij03}
\end{eqnarray}

\begin{defi}
A linear operator $\phi:\fa\rightarrow\fa$ is called a Nijenhuis operator if and only if\eqref{Nij00},  \eqref{Nij01}, \eqref{Nij02} and \eqref{Nij03} hold.
\end{defi}
We have seen that every trivial deformation produces a Nijenhuis operator. Conversely, Nijenhuis operator gives a trivial deformation as the following theorem shows.
\begin{theo}
Let $\phi$ be a Nijenhuis operator on $\fa$. Then a deformation of $\fa$ can be obtained by putting
\begin{eqnarray}
\omega_0(x_1,x_2)&=&x_1\cdot\phi_0(x_2)+\phi_0(x_1)\cdot x_2-\phi_0(x_1\cdot x_2),\\
\omega_1(x_1,y_1)&=&x_1\cdot\phi_1(y_1)+\phi_0(x_1)\cdot y_1-\phi_1(x_1\cdot y_1),\\
\omega_2(y_1,y_2)&=&[y_1,\phi_1(y_2)]+[\phi_1(y_1),y_2]-\phi_0([y_1,y_2]).
\end{eqnarray}
Furthermore, this deformation is a trivial one.
\end{theo}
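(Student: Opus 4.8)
The plan is to realise the linear map $\phi_t=\mathrm{id}_{\fa}+t\phi$ as an isomorphism between the deformed structure and (a formal extension of) the original Hom-Lie antialgebra $\fa$; both assertions of the theorem then follow at once. Treat $t$ as a formal parameter, let $\fa_t$ denote $\fa$ with scalars extended to formal power series in $t$, and regard $\alpha,\beta$ together with the \emph{undeformed} operations $\cdot,[\cdot,\cdot]$ as extended linearly to $\fa_t$; this is still a multiplicative Hom-Lie antialgebra. On the same module consider the \emph{deformed} operations $\cdot_t,[\cdot,\cdot]_t$ determined by $\omega_0,\omega_1,\omega_2$. The map $\phi_t=(\phi_{0t},\phi_{1t})$ with $\phi_{0t}=\mathrm{id}_{\fa_0}+t\phi_0$, $\phi_{1t}=\mathrm{id}_{\fa_1}+t\phi_1$ is invertible, with inverse $\sum_{k\ge 0}(-t)^k\phi^k$, and it commutes with $\alpha$ and $\beta$ — this last being exactly the compatibility \eqref{dehomo2}, which is part of the present setting.

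The first and main step is to verify that $\phi_t$ is an algebra homomorphism from the deformed operations to the undeformed ones, i.e.\ that $\phi_{0t}(x_1\cdot_t x_2)=\phi_{0t}(x_1)\cdot\phi_{0t}(x_2)$, $\phi_{1t}(x_1\cdot_t y_1)=\phi_{0t}(x_1)\cdot\phi_{1t}(y_1)$, and $\phi_{0t}([y_1,y_2]_t)=[\phi_{1t}(y_1),\phi_{1t}(y_2)]$. Expanding each side in powers of $t$: the $t^0$-terms agree trivially; the $t^1$-terms agree precisely because of the defining formulas for $\omega_0,\omega_1,\omega_2$ given in the statement; and the $t^2$-terms agree precisely because of the Nijenhuis identities \eqref{Nij01}, \eqref{Nij02}, \eqref{Nij03}, respectively. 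These are the very computations carried out just before the definition of a Nijenhuis operator, now run in the opposite direction. I expect this bookkeeping — in particular matching the three $t^2$-coefficients against the three Nijenhuis relations, while correctly tracking the $\bbZ_2$-grading of the arguments and the factor $\half$ that enters through \eqref{hanti02} — to be the only place where any real care is needed.

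Granting this, the conclusion is formal. Because $\phi_t$ is injective, intertwines $\alpha,\beta$, and carries the deformed operations to the undeformed ones, each of the axioms \eqref{hanti01}--\eqref{hanti04} for $\cdot_t,[\cdot,\cdot]_t$ is obtained by applying $\phi_t$ and invoking the corresponding axiom valid in $\fa_t$; similarly the multiplicativity of $\cdot_t$ with respect to $\alpha,\beta$ follows from that of $\cdot$ together with $\phi\alpha=\alpha\phi$, $\phi\beta=\beta\phi$. Hence $(\fa,\cdot_t,[\cdot,\cdot]_t,\alpha,\beta)$ is a multiplicative Hom-Lie antialgebra for every $t$, so $\omega=(\omega_0,\omega_1,\omega_2)$ generates a $t$-parameter deformation of $\fa$ — equivalently, one could read conditions (i) and (ii) of Theorem \ref{TwoCon} off the $t^2$- and $t^1$-coefficients of these axioms. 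Finally, the first step says exactly that $\phi_t$ satisfies the homomorphism relations \eqref{dehomo1}, and \eqref{dehomo2} holds by construction, so $\phi_t$ is the equivalence witnessing that this deformation is trivial.
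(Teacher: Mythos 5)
Your argument is correct, but it takes a genuinely different route from the paper's. The paper proceeds through Theorem \ref{TwoCon}: it notes that $\omega$ is by construction a coboundary of the $1$-cochain $\phi$, hence a $2$-cocycle (condition (ii)), and then verifies condition (i) --- that $(\omega_0,\omega_1,\omega_2)$ itself satisfies the axioms \eqref{hanti01}--\eqref{hanti04} --- by a direct term-by-term cancellation using \eqref{Nij01}--\eqref{Nij03} (the computation for \eqref{NHL1} is written out in full, the analogues \eqref{NHL2}--\eqref{NHL4} are left to the reader), and only at the very end remarks that the deformation is trivial. You instead prove the single statement that $\phi_t=\mathrm{id}+t\phi$ intertwines the deformed operations with the undeformed ones --- the $t$-coefficients of that identity being the defining formulas for $\omega_0,\omega_1,\omega_2$ and the $t^2$-coefficients being exactly \eqref{Nij01}, \eqref{Nij02}, \eqref{Nij03} --- and then obtain everything by transport of structure through the injective map $\phi_t$ over formal power series in $t$: the axioms for $\cdot_t,[\cdot,\cdot]_t$ follow from those of $\fa$, and triviality is precisely the homomorphism property \eqref{dehomo1}. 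This buys you the paper's identities \eqref{NHL1}--\eqref{NHL4} for free (they are the $t^2$-coefficients of the transported axioms) and makes the triviality claim, which the paper merely asserts, explicit; the cost is only the minor bookkeeping of inverting $\mathrm{id}+t\phi$. One caveat applies equally to both proofs: you assume $\phi_0\circ\a=\a\circ\phi_0$ and $\phi_1\circ\b=\b\circ\phi_1$, which is not part of the paper's definition of a Nijenhuis operator (only \eqref{Nij01}--\eqref{Nij03} are required there); but the paper's own argument also needs it, e.g.\ for $\omega$ to satisfy the cochain condition \eqref{cochain1} and for the deformed algebra to be multiplicative, so this should be read as an implicit hypothesis of the theorem rather than a gap in your proof --- indeed you are right to flag explicitly where it enters.
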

\begin{proof}
By definition, $(\omega_0,\omega_1,\omega_2)$ is a 2-cocycle of $\fa$ with coefficients in the adjoint representation. In the following, we will show that $(\fa,\omega,\a,\b)$ is a Hom-Lie antialgebra of deformation type.

First we check the Hom-associative equality \eqref{hanti01} hold for $\omega_0$. A direct computations shows that
\begin{eqnarray*}
&&\omega_0(\omega_0(x_1,x_2),\a(x_3))\\
&=&\omega_0(x_1,x_2)\cdot\phi_0(\a(x_3))+\phi_0(\omega_0(x_1,x_2))\cdot\a(x_3)-\phi_0(\omega_0(x_1,x_2)\cdot\a(x_3))\\
&=&\underline{(x_1\cdot\phi_0(x_2))\cdot\phi_0(\a(x_3))}+\underline{(\phi_0(x_1)\cdot x_2)\cdot\phi_0(\a(x_3))}-\phi_0(x_1\cdot x_2)\cdot\phi_0(\a(x_3))\\
&&+\underline{(\phi_0(x_1)\cdot\phi_0(x_2))\cdot\a(x_3)}\\
&&-\phi_0((x_1\cdot\phi_0(x_2))\cdot\a(x_3))-\underline{\phi_0((\phi_0(x_1)\cdot x_2)\cdot\a(x_3))}+\phi_0(\phi_0(x_1\cdot x_2)\cdot\a(x_3)),
\end{eqnarray*}
and
\begin{eqnarray*}
&&\omega_0(\a(x_1),\omega_0(x_2,x_3))\\
&=&\a(x_1)\cdot\phi_0(\omega_0(x_2,x_3))+\phi_0(\a(x_1))\cdot\omega_0(x_2,x_3)-\phi_0(\a(x_1)\cdot\omega_0(x_2,x_3))\\
&=&\underline{\a(x_1)\cdot(\phi_0(x_2)\cdot\phi_0(x_3))}\\
&&+\underline{\phi_0(\a(x_1))\cdot(x_2\cdot\phi_0(x_3))}+\underline{\phi_0(\a(x_1))\cdot(\phi_0(x_2)\cdot x_3)}
-\phi_0(\a(x_1))\cdot\phi_0(x_2\cdot x_3)\\
&&-\phi_0(\a(x_1)\cdot(x_2\cdot\phi_0(x_3)))-\underline{\phi_0(\a(x_1)\cdot(\phi_0(x_2)\cdot x_3))}+\phi_0(\a(x_1)\cdot\phi_0(x_2\cdot x_3)).
\end{eqnarray*}
Since the Hom-associative identity \eqref{hanti01} hold for $(\fa,\a,\b)$, the underline items are cancelled.
The remaining items are cancelled by using the Nijenhuis operator condition \eqref{Nij01} of the forms:
\begin{eqnarray*}
\phi_0(x_1\cdot x_2)\cdot \phi(\a(x_3))&=&\phi_0((x_1\cdot x_2) \phi_0(\a(x_3))+\phi_0(x_1\cdot x_2) \cdot \a(x_3)\\
&&-\phi_0((x_1\cdot x_2)\cdot \a(x_3))),\\
\phi_0(\a(x_1))\cdot \phi(x_2\cdot x_3)&=&\phi_0(\a(x_1) \cdot\phi_0(x_2\cdot x_3)+\phi_0(\a(x_1))\cdot (x_2 \cdot x_3)\\
&&-\phi_0(\a(x_1)\cdot(x_2\cdot x_3)).
\end{eqnarray*}
Thus we get
\begin{eqnarray}\label{NHL1}
\omega_0(\a(x_1),\omega_0(x_2,x_3))=\omega_0(\omega_0(x_1,x_2),\a(x_3)).
\end{eqnarray}

Second, by similar computations using the Nijenhuis operator conditions \eqref{Nij02} and \eqref{Nij03} we get
\begin{eqnarray}\label{NHL2}
&&\omega_1(\a(x_1),\omega_1(x_2,y_1))=\half\omega_1(\omega_0(x_1,x_2),\b(y_1)),\\
\label{NHL3}
&&\omega_0(\a(x),\omega_2(y_1,y_2))=\omega_2(\omega_1(x,y_1),\b(y_2))+\omega_2(\b(y_1),\omega_1(x,y_2)),\\
\label{NHL4}
&&\omega_1(\b(y_1),\omega_2(y_2,y_3))+\omega_1(\b(y_2),\omega_2(y_3,y_1))+\omega_1(\b(y_3),\omega_2(y_1,y_2))=0.
\end{eqnarray}
Thus the equality \eqref{hanti02}--\eqref{hanti04} hold for $\omega$.
From the above calculations, we obtain that $(\fa,\omega,\a,\b)$ is a Hom-Lie antialgebra. Therefore, $(\omega_0,\omega_1,\omega_2)$ satisfies two conditions in Theorem \ref{TwoCon} and it gives a trivial deformation.
\end{proof}

\subsection*{Acknowledgements}
The author would like to thank the  referee for careful reading of the manuscript and for valuable suggestions which helped us both in English and in depth to improve the quality of the paper.
This research was supported by NSFC(11501179, 11961049) and a doctoral research program of
Henan Normal University.

\end{document}